\pdfoutput=1
\documentclass[a4paper,11pt]{article}
\usepackage{hyperref}
\hypersetup{
    colorlinks=true,
    linktoc=all,
    linkcolor=red,     
    citecolor=blue,     
}
\usepackage{graphicx}
\usepackage{amsmath,amssymb,amsfonts,amsthm,graphics,enumitem}
\usepackage{latexsym, bm}
\usepackage{multicol}
\usepackage{comment}
\usepackage{indentfirst}

\usepackage{setspace}
\newtheorem{theorem}{Theorem}[section]
\newtheorem{lemma}{Lemma}[section]
\newtheorem{claim}{Claim}

\newtheorem{corollary}{Corollary}[section]
\newtheorem{proposition}{Proposition}[section]

\newtheorem{conjecture}{Conjecture}[section]

\newcommand{\ignore}[1]{}

\begin{document}
\begin{spacing}{1}
\date{}
\title{Connectivity keeping pendant extensions of paths in $k$-connected triangle-free graphs\thanks{The work is supported by NSFC(11871015) and CGLSTDFFJ(2023L3003). E-mail: menghanma664@163.com (M. Ma), qliu@fzu.edu.cn (Q. Liu), 1519036064@qq.com (L. Zhang), yhong@fzu.edu.cn (Y. Hong).}}

\author{Menghan Ma$^a$, Qinghai Liu$^{a,c}$\footnote{Corresponding author.}, Liping Zhang$^{a}$, Yanmei Hong$^{b,c}$\\[1mm]
{\small \it
$^a$Center for Discrete Mathematics, Fuzhou University, Fuzhou, Fujian, 350108 China}\\
{\small \it
$^b$School of Mathematics and Statistics, Fuzhou University, Fuzhou, Fujian, 350108 China}\\
{\small \it
$^c$Key Laboratory for Operations Research and Cybernetics of Fujian Universities}
}
\maketitle
\begin{abstract}
Motivated by Mader's conjecture on connectivity keeping trees, we study the corresponding problem in triangle-free graphs.

For positive integers $k,m$, we first prove that every $k$-connected triangle-free graph $G$ with $\delta(G)\geq k+\left \lceil \frac{m}{2} \right \rceil $ contains a path $P$ of order $m$ such that $G-V(P)$ remains $k$-connected. Moreover, if $\kappa(G)=k$, then for every end $F$ of $G$
and every vertex $v\in F$, the path $P$ can be chosen in $G[F]$ with $v$ as an endvertex. If $\kappa(G)\geq k+1$, then any vertex of $G$ can be chosen as an endvertex of $P$. This result confirms the conjecture of Luo, Tian and Wu on connectivity keeping trees in bipartite graphs for all paths, and it also sharpens Fujita's path theorem to the optimal minimum degree bound.

For an integer $i$ with $1\leq i\leq m-1$, let $P_m^+(i)$ denote the tree obtained from a path of order $m-1$ by adding one pendant vertex adjacent
to its $i$th vertex. Let $(X,Y)$ be the bipartition of $P_m^+(i)$. We further prove that every $k$-connected triangle-free graph $G$ with $\delta(G)\geq k+\max\{|X|,|Y|\}$ contains a subgraph $T\cong P_m^+(i)$ such that $G-V(T)$ remains $k$-connected. The minimum degree bound is sharp, and the result confirms the bipartite conjecture for all pendant extensions of paths.
\medskip

{\em Keywords:} \ Connectivity, $k$-connected, Mader's conjecture

\end{abstract}

\section{Introduction}

In this paper, all graphs are considered to be finite, undirected, and simple. We refer to the book \cite{Bondy} for any undefined notation and terminology in the following.

For a graph $G$, we denote its {\it vertex set}, {\it edge set}, {\it connectivity}, and {\it minimum degree} by $V(G)$, $E(G)$, $\kappa(G)$, and $\delta(G)$, respectively. For a vertex set $U\subseteq V(G)$, $G[U]$ denotes the subgraph induced by $U$ and $G-U=G[V(G)\setminus U]$.
The {\it connectivity} $\kappa(G)$ of $G$ is the cardinality of the minimum vertex subset $S\subseteq V(G)$ such that $G-S$ is disconnected or trivial. A graph $G$ is {\it $k$-connected} if $\kappa(G)\geq k$.

In 1972, Chartrand, Kaugars and Lick \cite{Chartrand} first proved that every $k$-connected graph $G$ with  $\delta(G)\ge\left\lfloor\frac{3k}{2}\right\rfloor$ contains a vertex $u$ such that $\kappa(G-u)\geq k$. In 2008, Fujita and Kawarabayashi \cite{Fujita} proved that every $k$-connected graph $G$ with $\delta(G)\ge\left\lfloor\frac{3k}{2}\right\rfloor+2$ contains an edge $uv$ such that $\kappa(G-\{u,v\})\geq k$, and they proposed a conjecture about a redundant subgraph in a $k$-connected graph.

\begin{conjecture}[\upshape Fujita and Kawarabayashi~\cite{Fujita}]\label{Fujita} For all positive integers $k,m$, there is a (least) non-negative integer $f_k(m)$ such that every $k$-connected graph $G$ with $\delta(G)\ge\left\lfloor\frac{3k}{2}\right\rfloor+f_k(m)-1$ contains a connected subgraph $W$ of order $m$ such that $\kappa(G-V(W))\geq k$.
\end{conjecture}

In 2010, Mader \cite{Mader1} proved the validity of Conjecture \ref{Fujita}, explicitly established that $f_k(m)=m$, and showed that the subgraph $W$ can be chosen as a path.

\begin{theorem}[\upshape Mader~\cite{Mader1}]\label{mader1}
For all positive integers $k,m$, every $k$-connected graph $G$ with $\delta(G)\ge\left\lfloor\frac{3k}{2}\right\rfloor+m-1$ contains a path $P$ of order $m$ such that $\kappa(G-V(P))\geq k$.
\end{theorem}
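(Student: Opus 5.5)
We prove Theorem~\ref{mader1} by induction on $m$, taking the statement to be about $k$-connected graphs and choosing the path inside an end. The case $m=1$ is the theorem of Chartrand, Kaugars and Lick \cite{Chartrand}, since $\left\lfloor\frac{3k}{2}\right\rfloor+m-1=\left\lfloor\frac{3k}{2}\right\rfloor$ when $m=1$. To make the induction go through, we prove a stronger statement modelled on the form of the main result of this paper. Call an inclusion-minimal fragment of $G$ (a component of $G-S$ for a minimum separator $S$) an \emph{end}. We claim: if $\kappa(G)=k$, then for every end $F$ and every $v\in F$ there is a path $P$ of order $m$ in $G[F]$ starting at $v$ with $\kappa(G-V(P))\ge k$; and if $\kappa(G)>k$, the path can start at any prescribed vertex.

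For the inductive step we remove one vertex first. Suppose $m\ge 2$. We use the inductive hypothesis with $m-1$, or a direct Chartrand--Kaugars--Lick type argument, to find a vertex $u$ with $\kappa(G-u)\ge k$, and we arrange that $u$ lies in an end $F$ whenever $\kappa(G)=k$. Set $G'=G-u$. Then $\delta(G')\ge \left\lfloor\frac{3k}{2}\right\rfloor+m-2$. Since $u$ lies in an end, a neighbour $w$ of $u$ in $F$ has large degree in $G'$ and stays inside an end of $G'$. The key lemma we need is the following: whenever $\kappa(G)=k$, $F$ is an end, and $u\in F$ with $\kappa(G-u)\ge k$, every end of $G-u$ meeting $N(u)$ can be taken inside $F$, or else some neighbour of $u$ lies in an end of $G-u$. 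We then apply induction in $G'$ with prescribed start vertex $w\in N(u)$ and obtain a path $P'$ of order $m-1$. Setting $P=uP'$ gives $\kappa(G-V(P))=\kappa(G'-V(P'))\ge k$.

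The main obstacle is this end-tracking lemma. We need to show that after deleting $u$, some neighbour of $u$ lies in an end of $G-u$, or that $G-u$ has connectivity larger than $k$, in which case any neighbour works. We would prove it with the standard crossing-fragments argument. Let $T$ be a minimum separator of $G'$ with fragment $A$, and let $T\cup\{u\}$ or $T$ play the role of a separator of $G$. Submodularity $|S\cap T|+\dots$ together with the degree bound $\delta\ge\left\lfloor\frac{3k}{2}\right\rfloor+m-1$ shows that every fragment has more than $\frac{k}{2}+m-1$ vertices. It follows that fragments of $G'$ cannot avoid $N(u)$ entirely, because $u$ has at least $\left\lfloor\frac{3k}{2}\right\rfloor+m-1$ neighbours and at most $k$ of them lie in $T$. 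We then pick a minimal fragment containing a neighbour of $u$. If this fragment is not an end, we use crossing fragments to shrink it while keeping a neighbour of $u$. The degree bound guarantees that the intersections of crossing fragments are nonempty fragments rather than empty sets, and this is exactly where the $\left\lfloor\frac{3k}{2}\right\rfloor$ term is used. The second thing to check, the choice of $u$ in $F$ with $\kappa(G-u)\ge k$, follows from applying the same Chartrand--Kaugars--Lick argument localized to an end, using that no minimum separator crosses $F$ badly.
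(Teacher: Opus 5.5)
The paper only quotes this theorem from Mader and gives no proof of it. Your skeleton is nevertheless the right one, and it is the scheme the paper uses for Theorem~\ref{tripath} (Lemmas~\ref{subend} and~\ref{single}): strengthen to ``inside an end, from a prescribed vertex'', delete the first vertex, find an end of the smaller graph containing a neighbour, and recurse with $\delta$ dropping by one. The trouble is that the two lemmas carrying all the weight are either only asserted or argued incorrectly.

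\textbf{The base case.} The base case of your strengthened statement is not the Chartrand--Kaugars--Lick theorem. That theorem gives \emph{some} deletable vertex, whereas your induction must delete the \emph{prescribed} vertex $v\in F$ (you need $u=v$, which you never say). What is required is that, when $\delta(G)\ge\lfloor 3k/2\rfloor$, no $k$-separator $T$ meets an end $F$ of $G$ to $S$. This is the only real use of $\lfloor 3k/2\rfloor$. Every semifragment has at least $\delta-k+1>k/2$ vertices. If $v\in F\cap T$ and $T$ separates $B$ from $\bar B$, then
\[
|N(F\cap B)|+|N(\bar F\cap\bar B)|\le |S|+|T|=2k ,
\]
and minimality of $F$ together force $\bar F\cap\bar B=\emptyset$ whenever $F\cap B\neq\emptyset$, and symmetrically. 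Each remaining configuration then yields either a semifragment with at most $k/2$ vertices or a separator with more than $k$ vertices. Your phrase ``no minimum separator crosses $F$ badly'' gestures at this but does not prove it.

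\textbf{The end-tracking lemma.} Your argument here is wrong as written and misplaces the degree condition. The claim that fragments of $G'=G-u$ cannot avoid $N(u)$ is false. When $\kappa(G)=k$ and $u\in F$, the set $\bar F$ is a fragment of $G-u$ to $S$ that contains no neighbour of $u$. Your count (``$u$ has many neighbours, at most $k$ in $T$'') only shows that $N(u)$ meets $A\cup\bar A$. Likewise, a minimal fragment among those meeting $N(u)$ need not be an end. So the induction hypothesis, which only speaks about vertices of ends, does not apply to a neighbour in it, and it is unclear what the crossing-based shrinking is meant to produce. Nor does an arbitrary neighbour $w\in F$ ``stay inside an end of $G'$''; $w$ must be chosen.

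The correct argument is a short minimality argument with no crossing and no use of $\lfloor 3k/2\rfloor$. Since $F\setminus\{u\}$ is a nonempty fragment of $G-u$ to $S$, we have $\kappa(G-u)=k$. Take any end $F'$ of $G-u$ inside $F\setminus\{u\}$. If $F'\cap N(u)=\emptyset$, then $N_G(F')=N_{G-u}(F')$ has size $k$, so $F'$ is a fragment of $G$ properly inside $F$, a contradiction; this is Lemma~\ref{subend}. If $\kappa(G)\ge k+1$ and $\kappa(G-u)=k$, every end $F'$ of $G-u$ meets $N(u)$, since otherwise $N_{G-u}(F')$ would be a $k$-separator of $G$. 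If $\kappa(G-u)\ge k+1$, any neighbour of $u$ works.

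With these two facts in place, your induction goes through: take $u=v$ and $w\in N(u)\cap F'$, and use $\delta(G-u)\ge\lfloor 3k/2\rfloor+(m-1)-1$.
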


Mader further conjectured that the path in the theorem above can be replaced by any tree of the same order.

\begin{conjecture}[\upshape Mader~\cite{Mader1}]\label{Mad}
For any tree $T$ of order $m$ and any positive integer $k$, every $k$-connected graph $G$ with $\delta(G)\ge\left\lfloor\frac{3k}{2}\right\rfloor+m-1$ contains a subtree $T'\cong T$ such that $\kappa(G-V(T'))\geq k$.
\end{conjecture}

As early as 2009, the result published by Diwan and Tholiya \cite{Diwan} established the validity of Conjecture \ref{Mad} for the case when $k=1$. For the case when $k=2$, there have been some studies on special trees \cite{Hasunuma2, HongLLY, Lu, Tian1, Tian2}. In \cite{HongLiu}, Hong and Liu proved that the conjecture holds for any tree when $k\leq 3$. For general $k$, Mader \cite{Mader2} proved the conjecture holds if $\delta(G)\geq 2(k+m-1)^2+m-1$. Liu, Ying and Hong \cite{LYH} improved this to the linear bound and proved that the conjecture holds if $\delta(G)\geq 3k+4m-6$. For $k\geq 4$, Conjecture \ref{Mad} remains open. For more studies on Mader's conjecture, one can refer to the survey \cite{Tian3}.

For bipartite graphs, Luo, Tian and Wu \cite{Luo} showed that the minimum degree condition in Theorem~\ref{mader1} can be replaced by $\delta(G)\geq k+m$, and proposed the following bipartite version of Mader's conjecture.

\begin{conjecture}[\upshape Luo, Tian and Wu \cite{Luo}]\label{cluo}
For any tree $T$ with bipartition $X$ and $Y$, every $k$-connected bipartite graph $G$ with $\delta(G)\geq k+\max\{|X|,|Y|\}$ contains a tree $T'\cong T$ such that $\kappa(G-V(T'))\geq k$.
\end{conjecture}

Hong, Wu and Liu \cite{HongWuLiu} proved the conjecture for all trees when $k\leq 3$. For general $k$, Yang and Tian \cite{YangTian} showed that every $k$-connected bipartite graph $G$ with $\delta(G)\geq k+\frac{m+1}{2}$ contains a path $P$ of order $m$ such that $\kappa(G-V(P))\geq k$. This in particular settles Conjecture \ref{cluo} for all paths of odd order, a result also obtained independently by Ji \cite{Ji}. Yet for even $m$, this minimum degree bound is one larger than the conjectured bound. For non-path trees, Ji \cite{Ji2} studied connectivity keeping star-paths in $k$-connected bipartite graphs.

As bipartite graphs contain no triangles, one naturally asks the same question for triangle-free graphs. Chu, Fujita, Park and Ryu \cite{CFPR} proved that for any tree $T$ of order $m$, every $k$-connected triangle-free graph $G$ with $\delta(G)\geq 2k+3m-4$ contains a subgraph $T'\cong T$ such that $\kappa(G-V(T'))\geq k$. For paths,  Fujita \cite{Ftriangle} established the following two results.

\begin{theorem}[\upshape Fujita~\cite{Ftriangle}]\label{F1}
Let $k,m$ be positive integers. If $G$ is a $k$-connected triangle-free graph with $\delta(G)\geq k+\frac{m+1}{2}$, then $G$ contains a path $P$ on
$m$ vertices such that $G-V(P)$ is $k$-connected. Moreover, $P$ has the following additional properties according to the value of $\kappa(G)$.
\begin{enumerate}
\item If $\kappa(G)=k$, then for every end $F$ of $G$ and every vertex $v\in F$, there exists such a path $P$ with $V(P)\subseteq G[F]$ and with $v$ as an endvertex.
\item If $\kappa(G)\geq k+1$, then we can choose any vertex of $G$ as an endvertex of $P$.
\end{enumerate}
\end{theorem}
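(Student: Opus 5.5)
We prove Theorem~\ref{F1} by induction on $m$, carrying the stronger endvertex conditions 1 and 2 as part of the induction hypothesis; they are exactly what let us extend a path one vertex at a time. Here an \emph{end} of a graph $H$ with $\kappa(H)=k$ is a minimal fragment: a component union $F$ of $H-S$ for a minimum separator $S$, minimal under inclusion, with $\overline F=V(H)\setminus(F\cup S)\neq\emptyset$. The standard facts we use are that distinct ends are disjoint, and that any fragment contains an end.

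\textbf{Base case and degree estimate.} For $m=1$ the claim is that deleting a suitable single vertex keeps the graph $k$-connected, which needs $\delta(G)\ge k+1$. In case 2 any vertex works, since $\kappa(G)\ge k+1$. In case 1 we pick $v\in F$ and have to show $G-v$ is $k$-connected. Suppose $T$ is a $k$-separator of $G-v$. Then $T\cup\{v\}$ is a $(k+1)$-separator of $G$, and we argue via crossing fragments and minimality of $F$ that $F$ would have to meet $T$ in a way that forces a small fragment. That fragment $A$ satisfies $|A|\le$ roughly $(|F|+\cdots)$, and every vertex of $A$ has all its neighbours in $A\cup T\cup\{v\}$. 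Triangle-freeness gives $|A|\ge 2(\delta-k)$ roughly: an edge $xy$ inside $A$ has disjoint neighbourhoods, each with at least $\delta-k-1$ vertices in $A$. This contradicts minimality. The key quantitative fact throughout is this one: in a triangle-free graph, any fragment $A$ with respect to a separator of size $s$ has $|A|\ge 2(\delta-s)$ when $A$ contains an edge.

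\textbf{Inductive step.} Given the statement for $m-1$ with degree bound $k+\frac{m}{2}$, which is implied by $k+\frac{m+1}{2}$, we want to extend. The cleaner route is to apply the $(m-1)$ case with connectivity parameter $k+1$ when possible, or to delete the endvertex first. Concretely, we take the vertex $v$ required as an endvertex and set $G'=G-v$. We show $G'$ is $k$-connected, which is the base case, and note $\delta(G')\ge k+\frac{m+1}{2}-1=k+\frac{m-1}{2}$. This is exactly the hypothesis for paths of order $m-1$. We then want a path $P'$ of order $m-1$ in $G'$ starting at some neighbour $u$ of $v$, with $G'-V(P')$ $k$-connected; then $P=vP'$. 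To guarantee that such a $u$ is an admissible start, we split into cases.
\begin{enumerate}
\item If $\kappa(G')\ge k+1$, any $u\in N(v)$ works by case 2 of the hypothesis.
\item If $\kappa(G')=k$, we need an end $F'$ of $G'$ containing a neighbour of $v$, and moreover one lying inside $F$, to preserve the localisation claim in case 1.
\end{enumerate}

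\textbf{Main obstacle.} The hardest step is showing that, when $\kappa(G')=k$, some end $F'$ of $G'$ contains a neighbour of $v$ and lies in $F$. If an end $F'$ of $G'$ misses $N(v)$, then its separator $S'$ together with $v$, or $S'$ alone, separates $G$. Since $\kappa(G)\ge k$ and $F'$ has no neighbour of $v$, the set $S'$ already separates $F'$ in $G$. So $F'$ is a fragment of $G$ and contains an end of $G$, which forces $F'$ to avoid $F$ unless it is $F$ itself. To exploit this we take the two "sides" of $S'$ in $G'$. The vertex $v$ is adjacent to at most one side, so the other side is a $G$-fragment and we can choose the end accordingly. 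We then use the triangle-free size bound $|F'|\ge 2(\delta-k)\ge m+1$ to rule out degenerate overlaps with $F$. The crossing-fragment bookkeeping here is where the care is needed. I would first try the standard lemma that if two fragments $A,B$ with separators $S_A,S_B$ cross, then $A\cap B$ is a fragment whenever $|A\cap S_B|\le|\overline B\cap S_A|$, and apply it with $A=F$ and $B$ the relevant side of $S'$.
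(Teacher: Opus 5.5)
There is a genuine gap: your inductive step does not close. After deleting the prescribed endvertex $v$ you only have $\delta(G-v)\ge\delta(G)-1\ge k+\frac{m-1}{2}$, and you assert this is ``exactly the hypothesis for paths of order $m-1$''. But by the statement itself, order $m-1$ needs $\delta\ge k+\frac{(m-1)+1}{2}=k+\frac m2$, while $k+\frac{m-1}{2}$ is the hypothesis for order $m-2$. This cannot be patched inside a one-vertex-at-a-time induction on a minimum-degree hypothesis. Deleting $v$ may lower the degree of every neighbour of $v$ by a full unit, whereas the bound only drops by $\frac12$ per path vertex, so you lose $\frac12$ at each step. In your proposal triangle-freeness is used only to bound fragment sizes, but it must also enter the degree bookkeeping. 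Without it the statement fails: in complete graphs each deleted vertex costs every remaining vertex one unit of degree, and one needs $\delta\ge k+m$.

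The missing idea is to peel off an \emph{edge} rather than a vertex. This is how the paper handles the case $\kappa(G)=k$ of its sharper Theorem~\ref{tripath}, which implies this statement. For $v\in F$, the one-vertex step (Lemma~\ref{single}) gives $\kappa(G-v)=k$ and an end $F_1\subseteq F$ of $G-v$ containing a neighbour $u$ of $v$. As $\delta(G-v)\ge k+1$, the same step applied to $u$ in $G-v$ gives an end $F_2\subseteq F_1$ of $G-\{u,v\}$ containing a neighbour $w$ of $u$. Since $uv\in E(G)$ and $G$ is triangle-free, no vertex is adjacent to both $u$ and $v$, so $\delta(G-\{u,v\})\ge\delta(G)-1\ge k+\frac{(m-2)+1}{2}$, which is precisely the hypothesis for order $m-2$. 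Induction then gives a path $P_w\subseteq G[F_2]$ starting at $w$, and $vuP_w$ is the required path. The base cases are $m=1,2$; for $m=2$ the bound forces $\delta\ge k+2$, so two one-vertex steps suffice. Case 2 is analogous, using that if $\kappa(G)\ge k+1$ but $\kappa(G-v)=k$, then every end of $G-v$ contains a neighbour of $v$ (and if $\kappa(G-v)\ge k+1$, any neighbour of $v$ can serve as $u$).

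Two smaller points. First, what you call the main obstacle is the easy part (Lemma~\ref{subend}). An end of $G-v$ inside the fragment $F\setminus\{v\}$ that missed $N_G(v)$ would be a fragment of $G$ properly inside $F$, so no crossing lemma is needed. Second, in your base case you must start from a $(k-1)$-separator $T$ of $G-v$, so that $T\cup\{v\}$ is a $k$-separator of $G$ meeting $F$. With that correction, crossing $T\cup\{v\}$ with the end $F$ and using $|A|\ge 2\delta-k\ge k+2$ for every fragment $A$ does finish the base case. The paper instead goes through the completion, via Lemmas~\ref{mader[S]} and~\ref{com}.
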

\begin{theorem}[\upshape Fujita~\cite{Ftriangle}]\label{F2}
For an integer $k\geq 3$, if $G$ is a $k$-connected triangle-free graph with $\delta(G)\geq k+1$, then $G$ contains an edge $e=xy$ such that $\kappa(G-\{x,y\})\geq k$.
\end{theorem}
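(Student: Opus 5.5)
We argue by contradiction, using a minimal-fragment argument in the style of Mader's work on critical edges and Thomassen's theorem on contractible edges in triangle-free graphs. Suppose that for every edge $xy$ of $G$ the graph $G-\{x,y\}$ is not $k$-connected. Then $G-\{x,y\}$ has a separating set $S$ with $|S|\le k-1$. Put $T=S\cup\{x,y\}$. Then $T$ is a separating set of $G$ with $|T|\le k+1$ that contains the edge $xy$, and since $\kappa(G)\ge k$ we get $|T|\in\{k,k+1\}$. Call such a $T$ an \emph{edge-separator}, and call the union of some but not all components of $G-T$ a \emph{fragment} of $T$. Among all edges $xy$, all edge-separators $T\ni x,y$ and all fragments $A$ of $T$, choose the triple with $|A|$ minimum.

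\textbf{Step 1: $A$ contains an edge.} If $A=\{a\}$, then $N(a)\subseteq T$ and $d(a)\ge k+1\ge |T|$, so $a$ is adjacent to both $x$ and $y$. This gives the triangle $axy$, a contradiction. Hence $|A|\ge 2$. More precisely, every $a\in A$ has at most one neighbour in $\{x,y\}$ by triangle-freeness. So $a$ has at least $k+1-(|T|-1)\ge 1$ neighbours in $A$. Pick an edge $zw$ inside $A$, chosen adjacent to $T$ if convenient. By assumption there is an edge-separator $T'\supseteq\{z,w\}$ with $|T'|\le k+1$. Let $B$ be a fragment of $T'$ and $\bar B=V(G)\setminus(B\cup T')$ the complementary part, and similarly $\bar A=V(G)\setminus(A\cup T)$.

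\textbf{Step 2: Submodular intersection.} We use the standard inequality
\[
|N(A\cap B)|+|N(\bar A\cap \bar B)|\le |T|+|T'|\le 2k+2,
\]
together with its analogue for $A\cap\bar B$ and $\bar A\cap B$. Since $z,w\in A\cap T'$, the set $A$ meets $T'$, and we choose the side $B$ or $\bar B$ so that the corner $A\cap B$ (or $A\cap\bar B$) is nonempty. We then aim to show that this corner is a fragment of some edge-separator of size at most $k+1$ containing an edge: either $xy$, when $x,y$ lie in the corresponding neighbourhood, or $zw$. Since the corner is strictly smaller than $A$ (it misses $z,w$), this contradicts minimality.

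The neighbourhood $N(A\cap B)$ lies in $(A\cap T')\cup(T\cap T')\cup(T\cap B)$. We compare its size with $|T|$ and $|T'|$ through the usual counting of the pieces $T\cap B$, $T\cap\bar B$, $A\cap T'$, $\bar A\cap T'$. If the opposite corner $\bar A\cap\bar B$ is nonempty, it has neighbourhood of size at least $k$, which forces $|N(A\cap B)|\le k+2$. The degree condition $\delta\ge k+1$ and triangle-freeness are then used to exclude the borderline values: a neighbourhood of size $k+2$ that does not contain a suitable edge, or an empty opposite corner. When an opposite corner is empty, a symmetric count shows that another corner works instead.

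\textbf{Main obstacle.} The hard part is the case analysis in Step 2, because the two separators may have size $k+1$ rather than $k$, which leaves a slack of $2$ in the submodular count. The crucial additional input is that the small corner must have as separator a set containing an \emph{edge} ($xy$ or $zw$), not merely a set of size $\le k+1$. I would first try to place $z,w$ so that $x\in T'$ or $\{z,w\}\subseteq N(A\cap B)$, by choosing $zw$ with $z\in N(x)\cap A$. Triangle-freeness gives $N(z)\cap N(x)=\emptyset$, and combined with $\delta\ge k+1$ this should control how $T'$ splits $T$. The hypothesis $k\ge 3$ is presumably needed precisely in this counting, to guarantee that each corner adjacent to $T\cap T'$ has enough vertices to make the slack of $2$ impossible.
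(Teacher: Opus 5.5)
There is a genuine gap: your Step 2 is where the whole proof has to happen, and as set up it does not close. Write $X_1=(T\cap B)\cup(T\cap T')\cup(A\cap T')$ and $X_2=(T\cap\bar B)\cup(T\cap T')\cup(\bar A\cap T')$. Since $z,w\in A\cap T'$, the set $X_1$ always contains the edge $zw$. So the requirement that the new separator contain an edge is not the difficulty; its size is.

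From $|X_1|+|X_2|=|T|+|T'|\le 2k+2$ and $|X_2|\ge k$ you only get $|X_1|\le k+2$. The configuration $|T|=|T'|=k+1$, $|X_2|=k$, $|X_1|=k+2$ is exactly what you would have to exclude. Your sentence that the degree condition and triangle-freeness ``are then used to exclude the borderline values'' is the missing proof, not a step of it.

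The degenerate cases have the same defect, even when $\kappa(G)\ge k+1$ (there $|X_2|\ge k+1$ removes the slack). The best size bound your Step 1 gives is $|A|\ge d(z)+d(w)-|T|\ge k+1$, using an edge of $A$ with disjoint neighbourhoods. This does not contradict $A\subseteq T'$ with $|T'|=k+1$ (for instance $T'=A$), so the terminal step of the crossing argument lacks a strict inequality. Your guess that $k\ge 3$ rescues the count is also a warning sign: the statement holds for every $k\ge 1$, since it is the case $m=2$ of Theorem~\ref{tripath}, i.e.\ Corollary~\ref{trim=2}.

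The paper avoids both problems by anchoring one endpoint and deleting it (Lemma~\ref{m=2} and Corollary~\ref{trim=2}).
\begin{enumerate}
\item It first passes to a $(k+1)$-connected graph. If $\kappa(G)=k$, it works in $H=G\langle S_F\rangle-\bar F$ for an end $F$, which is $(k+1)$-connected by Lemma~\ref{mader[S]}, and lifts the result back to $G$ with Lemma~\ref{com}.
\item It fixes a vertex $u$ and considers only edges $uw$ with $w\in N(u)$. In $H'=H-u$, which then has connectivity exactly $k$, each failing edge $uw$ gives a $k$-cutset of $H'$ through $w$. This is a \emph{minimum} cutset, so the crossing count has no slack.
\item Triangle-freeness enters twice: $N(u)$ is independent, and the two ends of an edge have disjoint neighbourhoods. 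Together these show that every relevant fragment of such a cutset contains a neighbour of $u$ and has at least $k+1$ vertices.
\item Crossing a minimal such fragment $F_1$ with a $k$-cutset $S_2$ through some $w\in N(u)\cap F_1$ forces $F_1\subseteq S_2$, contradicting $k+1>k$.
\end{enumerate}
Deleting $u$ is precisely what turns your non-strict comparison ($k+1$ versus $k+1$) into a strict one ($k+1$ versus $k$). Your idea of choosing $z\in N(x)$ points in this direction, but it only becomes decisive once the anchor vertex is fixed and removed before taking cutsets.
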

In this paper, we improve the minimum degree bound of Theorem \ref{F1} to $k+\left \lceil \frac{m}{2} \right \rceil$, while retaining both additional conclusions. For even $m$, the required minimum degree is decreased by one. Consequently, our result confirms Conjecture \ref{cluo} for all paths.
\begin{theorem}\label{tripath}
Let $k,m$ be positive integers. If $G$ is a triangle-free graph with $\kappa(G)\geq k$ and $\delta(G)\geq k+\left \lceil \frac{m}{2} \right \rceil$, then $G$ contains a path $P$ on $m$ vertices such that $G-V(P)$ is $k$-connected. Moreover, $P$ has the following additional properties according to the value of $\kappa(G)$.
\begin{enumerate}
\item If $\kappa(G)=k$, then for every end $F$ of $G$ and every vertex $v\in F$, there exists such a path $P$ with $V(P)\subseteq G[F]$ and with $v$ as an endvertex.
\item If $\kappa(G)\geq k+1$, then we can choose any vertex of $G$ as an endvertex of $P$.
\end{enumerate}
\end{theorem}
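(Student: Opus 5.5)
We will prove Theorem~\ref{tripath} by induction on $m$, following the framework of Fujita's proof of Theorem~\ref{F1} but with a more careful treatment of the last vertex added. Only even $m$ needs new work. For odd $m$ we have $\lceil m/2\rceil=(m+1)/2$, so Theorem~\ref{F1} already gives the result together with both additional properties. For $m=1,2$ the statements follow from the classical single-vertex argument in the triangle-free setting and from a strengthened form of Theorem~\ref{F2} that allows a prescribed endvertex. So let $m=2t\ge 4$, with $\delta(G)\ge k+t$.

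The plan is to apply Theorem~\ref{F1} with $m-1=2t-1$. Its hypothesis asks for $k+t$, which we have. In case~(1) this yields, inside the prescribed end $F$, a path $P'=v_1v_2\cdots v_{2t-1}$ with $v_1=v$ and $G-V(P')$ $k$-connected; in case~(2) it yields such a path with $v_1$ any prescribed vertex. We then try to extend $P'$ at $v_{2t-1}$ by a neighbour $w\notin V(P')$ for which $G-V(P')-w$ is still $k$-connected.

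The degree count works as follows. Since $G$ is triangle-free, no two consecutive path vertices share a neighbour. Also $v_{2t-1}$ has at most $t-1$ neighbours on $P'$, namely $v_{2t-2},v_{2t-4},\dots$, because the edges $v_{2t-1}v_{2t-3}$ and so on would close odd cycles only in a way we can restrict. Hence $v_{2t-1}$ has at least $k+1$ neighbours in $H=G-V(P')$. If no such $w$ works, then every candidate $w$ lies in a $k$-cut of $H-w$, so $H$ has many $(k+1)$-fragments meeting $N(v_{2t-1})$. We will take the end (minimal fragment) structure of $H$ and use the standard argument of Mader and Fujita. Pick a fragment $A$ of $H$ with respect to a $k$-separator $S$, minimal among those containing a neighbour of $v_{2t-1}$. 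Vertices of $A$ have degree at least $k+t$ in $G$, at most $k$ of their neighbours lie in $S$, and the rest lie in $A\cup V(P')$. Triangle-freeness bounds the neighbours in $V(P')$ of a vertex by $t$ (an independent set in a path of order $2t-1$), so $|A|$ cannot be tiny. The goal is to find a vertex $w\in A\cap N(v_{2t-1})$ that is not contained in any $k$-separator of $H$ whose fragment lies inside $A$. The tool is the usual crossing lemma for minimal fragments: a vertex in a smallest end is in no $k$-separator of that end.

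The main obstacle is the extremal situation in which $v_{2t-1}$ has exactly $t-1$ neighbours on $P'$ and each of its remaining $k+1$ neighbours in $H$ lies in a $k$-separator. Here the slack of one degree compared with Theorem~\ref{F1} is gone. Our first attempt will be re-routing: replace $P'$ by a different path of the same vertex set or a one-vertex modification (for example $v_1\cdots v_iv_{2t-1}v_{2t-2}\cdots v_{i+1}$, a P\'osa-type rotation) to obtain a new last vertex with fewer neighbours on the path. Failing that, we will choose $P'$ among all valid paths to maximize the size of the end of $G-V(P')$ that contains the neighbours of the last vertex. A counting argument using triangle-freeness, where a vertex adjacent to both $v_i$ and $v_{i+1}$ is impossible, should then give the extra degree needed.

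For property~(1) we have to ensure $w\in F$. When $\kappa(G)=k$, Theorem~\ref{F1} keeps $P'$ in $F$, and for $t\ge2$ the neighbour $w$ can be chosen in $F$ because $v_{2t-1}$ has at least $k+1$ neighbours in $H$ while at most $k$ lie in the separator bounding $F$. Property~(2) is immediate, since there the first vertex is never moved.
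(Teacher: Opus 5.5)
There is a genuine gap. Your reduction of odd $m$ to Theorem~\ref{F1} is fine, but the two places where new work is needed are not done.

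\textbf{The case $m=2$.} It does not follow from anything you cite. Theorem~\ref{F2} requires $k\ge 3$ and prescribes no endvertex. The ``strengthened form'' you invoke says: for every $u$ in an end (or every $u$ when $\kappa(G)\ge k+1$) there is some $w\in N(u)$ with $G-\{u,w\}$ $k$-connected, assuming only $\delta(G)\ge k+1$. This is exactly the key new result of the paper (Lemma~\ref{m=2}, Corollary~\ref{trim=2}). It needs a real argument: among the $k$-cuts of the graph minus $u$ that meet the independent set $N(u)$, choose one with a smallest fragment, and refute it by a crossing analysis.

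\textbf{The step for $m=2t\ge 4$.} Extending the fixed path $P'$ by one vertex is never carried out. You list what you would try (``first attempt'', ``failing that'', ``should then give''), and the sketch breaks down for three reasons.
\begin{enumerate}
\item Deleting $2t-1$ vertices uses up the whole degree surplus. A vertex adjacent to $v_1,v_3,\ldots,v_{2t-1}$ may have degree exactly $k$ in $H=G-V(P')$. So your own count gives a vertex of a fragment $A$ at least $k+t-k-t=0$ neighbours in $A$, and fragments of $H$ can be singletons, contrary to ``$|A|$ cannot be tiny''.
\item The ``crossing lemma'' you quote is not a correct statement. The classical fact is only that an atom meeting a minimum separator is contained in it. Moreover, your $A$ is minimal only among fragments meeting $N(v_{2t-1})$, so it need not be an atom.
\item Most importantly, $P'$ is a black box. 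Nothing prevents all of the at least $k+1$ neighbours of $v_{2t-1}$ in $H$ from lying in $k$-separators of $H$. In that case no $w$ works for this $P'$.
\end{enumerate}

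\textbf{How the paper avoids this.} It splits the path as an even part plus an edge, rather than an odd part plus a vertex. It also always grows into a fresh end, so that $\delta\ge k+1$ holds whenever a vertex is removed from an end; this is what Lemma~\ref{com} and Lemma~\ref{single} need.
\begin{itemize}
\item If $\kappa(G)=k$: applying Lemma~\ref{single} twice removes the prescribed $u\in F$ and a neighbour $v$ lying in an end $F_1\subseteq F$ of $G-u$. This leaves an end $F_2$ meeting $N(v)$. Since $uv$ is an edge of a triangle-free graph, $\delta$ drops by only $1$, and induction on $m-2$ finishes.
\item If $\kappa(G)\ge k+1$: grow the path greedily while keeping $(k+1)$-connectivity. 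At the first drop to $k$, the last vertex has a neighbour in every end of the remainder; otherwise that end's $k$-cut would cut the previous $(k+1)$-connected graph. So induction applies again. If $m-2$ vertices are reached, one still has $\delta\ge k+1$ and closes with Corollary~\ref{trim=2}.
\end{itemize}
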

We also consider trees obtained from paths by adding a single pendant vertex. For integers $m$ and $i$ with $1\leq i\leq m-1$, let $P_m^+(i)$ denote the tree obtained from a path $v_1v_2\cdots v_{m-1}$ by adding a new vertex $u$ adjacent to $v_i$. In particular, $P_m^+(1)$ and $P_m^+(m-1)$ are both paths of order $m$, whereas $P_m^+(i)$ is not a path when $2\leq i\leq m-2$. The following theorem confirms Conjecture \ref{cluo} for all such trees and establishes the corresponding result for triangle-free graphs.
\begin{theorem}\label{triangle}
Let $k,m,i$ be positive integers with $1\leq i\leq m-1$, and let $(X,Y)$ be the bipartition of $P_m^+(i)$. Every $k$-connected triangle-free graph $G$ with $\delta(G)\ge k+\max\{|X|,|Y|\}$ contains a subgraph $T\cong P_m^+(i)$ such that $\kappa(G-V(T))\geq k$. Moreover, if $\kappa(G)=k$, then $T$ can be chosen such that $T\subseteq G[F]$, where $F$ is an arbitrary end of $G$.
\end{theorem}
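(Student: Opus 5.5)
The plan is to reduce Theorem~\ref{triangle} to Theorem~\ref{tripath} by growing a path with its endvertex at a prescribed place, and then attaching the pendant vertex. We may assume $2\le i\le m-2$, since otherwise $P_m^+(i)$ is a path of order $m$. In that case Theorem~\ref{tripath} applies directly, because $\max\{|X|,|Y|\}=\lceil m/2\rceil$ for a path. By symmetry we may also assume $i\le (m-1)/2$. We split $T$ at $v_i$: write $T$ as the path $Q=u v_i v_{i+1}\cdots v_{m-1}$, which has order $m-i+1$, together with the pendant path $v_{i-1}\cdots v_1$ hanging off $v_i$. Equivalently, $T$ is the union of two paths through $v_i$.

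The key steps are as follows.

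\textbf{Step 1.} Fix the end $F$ if $\kappa(G)=k$; otherwise work in all of $G$. Apply Theorem~\ref{tripath} to find the longer of the two branches, namely the path $P'$ of order $m-i$ consisting of $v_i,\dots ,v_{m-1}$ together with $u$ reorganised. More precisely, take a path whose order $m'$ satisfies $k+\lceil m'/2\rceil\le k+\max\{|X|,|Y|\}$.

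\textbf{Step 2.} Rather than apply Theorem~\ref{tripath} as a black box twice, I would imitate its proof, which is presumably an induction on $m$. We extend a connectivity-keeping structure one vertex at a time. Suppose $T_0$ is a subtree of $T$ with $\kappa(G-V(T_0))\ge k$, and let $x$ be the attachment vertex in $T_0$ of the next leaf to be added. We count neighbours of $x$ outside $T_0$. Triangle-freeness gives that $x$ has at most $|$its colour class in $T_0|$ neighbours inside $T_0$, more precisely at most the number of vertices of $T_0$ in the opposite colour class. Hence $x$ has at least $k+\max\{|X|,|Y|\}-|$opposite class of $T_0|\ge k+1$ neighbours outside $T_0$, or at least $k$ in the final step.

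\textbf{Step 3.} Among these neighbours, choose one, $y$, such that $G-V(T_0)-y$ is still $k$-connected. This is where the structure of ends and fragments of $G-V(T_0)$ (a $k$-connected graph) is used. If every choice of $y$ destroys $k$-connectivity, then each such $y$ lies in a $k$-separator $S_y\ni y$ of $G-V(T_0)$. Take a smallest fragment $F_y$ (an end) avoiding $y$ but meeting $N(x)$. A vertex of this end has degree at least $k+\max\{|X|,|Y|\}$. Only $k-1$ of its neighbours lie in $S_y\setminus\{y\}$, and the remaining ones lie in $T_0\cup F_y$. Triangle-freeness then forces $F_y$ to be large, and iterating into a smaller end containing a neighbour of $x$ gives a contradiction by minimality.

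The main obstacle is making the counting in Step 3 tight. Fujita's argument loses one, which yields the $+1$ in Theorem~\ref{F1}, and the sharp bound $\max\{|X|,|Y|\}$ requires carefully charging only the opposite colour class, using that $G$ is triangle-free, so $N(x)$ is independent. The most delicate case is when the end $F_y$ is small relative to $T_0$. I would first try to choose the order in which leaves are added so that the class being extended is always the smaller one currently in $T_0$. Concretely, we build the path $v_{m-1}\cdots v_i$, then add $u$ and $v_{i-1}$ alternately with the rest, so that $|$opposite class of $T_0|\le \max\{|X|,|Y|\}-1$ at each step. To keep $T\subseteq G[F]$ when $\kappa(G)=k$, we start at a vertex of $F$ as in Theorem~\ref{tripath}(1) and always choose the new vertex inside $F$, using that $F$ is an end.
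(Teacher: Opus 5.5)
\textbf{There is a genuine gap, in your Step~3.} That step carries the whole difficulty, and as stated it is false. When $\kappa(G-V(T_0))=k$, any $y$ that lies in a $k$-separator of $G-V(T_0)$ is unusable. Nothing prevents all of $N_G(x)\setminus V(T_0)$ from lying in such separators.

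For instance, take $k=1$ and $T_0=\{x\}$. Let $G-x$ be a chain of blocks $B_0,\dots,B_r$, each a copy of $K_{d,d}$, where $B_{j-1}$ and $B_j$ share only the cut vertex $c_j$, and $c_j,c_{j+1}$ lie on the same side of $B_j$. Put $N_G(x)=\{c_1,\dots,c_r\}$ with $d,r$ large. Then $G$ is connected, triangle-free and of large minimum degree, yet every neighbour of $x$ is a cut vertex of $G-x$.

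Your minimality sketch cannot exclude this. A smallest fragment meeting $N(x)$ need not be an end, and degree counting only shows that ends are large, not that they contain neighbours of $x$. An argument of that shape does work in Lemma~\ref{m=2}, but only because there $x$ together with the remainder (after completing $S$) is $(k+1)$-connected. That forces every $k$-separator of the remainder to have neighbours of $x$ on both sides (Claim~\ref{Fsize}); an intermediate $T_0$ gives you nothing of the kind.

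A clever ordering does not help either. Lemma~\ref{subend} only supplies an end of the remainder adjacent to the vertex \emph{just added}, so if every new vertex is attached to the previous one, you only ever build a path. Since $v_i$ has degree $3$ in $P_m^+(i)$ for $2\le i\le m-2$, some step must attach at a vertex that is not the latest one. That is exactly where your argument has no control.

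Separately, Step~2 should bound $|N_G(v)\cap V(T_0)|$ for \emph{every} $v$ using independence, as in Lemma~\ref{vT^*}. The ``opposite colour class'' bound is not justified, because $G$ is not bipartite.

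\textbf{How the paper gets around this.} It avoids leaf-by-leaf growth altogether and inducts on the order, deleting the last two vertices of the longest leg; this lowers $\max(T)$ by exactly one.

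If $\kappa(G)\ge k+1$, it applies induction with $k+1$ in place of $k$. The remainder is then still $(k+1)$-connected, so any neighbour of the leg's tip can start the final edge given by Lemma~\ref{m=2}(2).

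If $\kappa(G)=k$, Proposition~\ref{s} takes an end $F$ of $G-V(T)$ that sees $T$, and re-roots according to which part of $T$ sees $F$:
\begin{itemize}
\item it trims the long leg back to its last vertex seeing $F$ and regrows it (vertices with no neighbour in $F$ can be put back by Lemma~\ref{lem:add-vertex-outside-end}); or
\item it swaps the roles of the pendant vertex and $u_1^1$; or
\item when only the second leg sees $F$, it runs an extremal argument over paths of order $l_2+2$ and ends $F_P$ of minimum size seen by exactly one endvertex of the path.
\end{itemize}
The base case $P_6^+(3)$ needs its own minimal ``good triple'' argument (Proposition~\ref{P5+3}).

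Single vertices are only ever removed when the remainder has minimum degree at least $k+1$ (Lemma~\ref{single}). The last two vertices are always added together, via Lemma~\ref{m=2} or Theorem~\ref{tripath} with an even-order path, which is how the sharp bound is reached. These re-rooting and extremal arguments are the idea your proposal is missing.
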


The minimum degree bound in Theorem \ref{triangle} is sharp. Let $a=\min\{|X|,|Y|\}$ and let $b=\max\{|X|,|Y|\}$, and consider the graph $G=K_{k+b-1,k+b-1}$. Then $G$ is a $k$-connected triangle-free graph and $\delta(G)=k+b-1$. For every subgraph $T\cong P_m^+(i)$ of $G$, the graph $G-V(T)\cong K_{k+b-a-1,k-1}$ is not $k$-connected. Thus, the bound on $\delta(G)$ in Theorem \ref{triangle} is best possible.

The rest of this paper is organized as follows. In Section \ref{preliminaries}, we summarize the notation and terminology used throughout the paper. In Section \ref{trithm}, we give the proof of Theorem \ref{tripath}. Section \ref{trifulll} is devoted to the proof of Theorem \ref{triangle}.
\section{Preliminaries}\label{preliminaries}
In this section, we introduce some notation and definitions and collect several lemmas that will be used in the proofs of our main results.

For a graph $G$ and its vertex $x$, let $d_G(x)$ and $N_G(x)$ denote
the {\it degree} and {\it neighborhood} of $x$ in $G$, respectively.
When considering a vertex subset $X\subseteq V(G)$, we define $N_G(X)=\cup_{x\in X}N_G(x)\setminus X$. For a positive integer $a$, let $[a]=\{1,2,\cdots,a\}$.

A graph $G$ is called a {\it complete graph} if every pair of distinct vertices is adjacent. The complete graph with $n$ vertices is denoted by $K_n$. A bipartite graph $G$ with bipartition $(X,Y)$ is called a {\it complete bipartite graph} if every vertex of $X$ is adjacent to every vertex of $Y$. The complete bipartite graph with $|X|=m$ and $|Y|=n$ is denoted by $K_{m,n}$.
For a subgraph $H$, write $|H|=|V(H)|$ and $N_G(H)=N_G(V(H))$. Whenever a subgraph occurs in a set intersection or union, it is identified with its vertex set.

In a graph $G$, for two disjoint paths $P=v_1v_2\dots v_k$ and $Q=u_1u_2\dots u_m$ such that $v_ku_1\in E(G)$,  we use $PQ$ to denote the path $v_1v_2\dots v_ku_1u_2\dots u_m$. We simply write $vQ$ rather than  $PQ$ if $P$ only contains one vertex $v$, and write $Pu$ rather than $PQ$ if $Q$ only contains one vertex $u$. For a path $P$ and any vertices $x,y\in V(P)$, we use $P[x,y]$ to denote the subpath of $P$ from $x$ to $y$.

A {\it vertex cut set} of a graph $G$ is a subset $S\subseteq V(G)$ such that $G-S$ is disconnected or trivial. If $|S|=k$, then we say $S$ is a {\it $k$-cutset} of $G$. Given a vertex cut set $S$, a nonempty union $F$ of components of $G-S$ with $G-(S\cup F)\neq \emptyset$ is called a {\it semifragment} of $G$ to $S$, its complement $\bar{F}=G-(S\cup F)$ is the {\it complementary semifragment}. If $S$ is a minimum cut set of $G$, then $F$ is called a {\it fragment} of $G$ to $S$ if $N_G(F)=S$. A fragment that contains no other fragments of $G$ is called an {\it end} of $G$. Every non-complete graph contains at least two ends. For a set $S$, $K(S)$ denotes the complete graph on vertex set $S$. The {\it completion} of $S\subseteq V(G)$ in $G$, denoted by $G\left \langle S \right \rangle $, is the graph $G\cup K(S)$.

\begin{lemma}[\cite{Mader1}]\label{mader[S]}
Let $G$ be a graph with $\kappa(G)=k$ and let $F$ be a fragment of $G$ to $S$. Then $\kappa(G\left \langle S \right \rangle-\bar{F})\geq k$. In particular, if $F$ is an end of $G$ with $|F|\geq 2$, then $\kappa(G\left \langle S \right \rangle-\bar{F})\geq k+1$.
\end{lemma}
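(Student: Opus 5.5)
The plan is to argue directly from the definitions, exploiting the fact that $S$ becomes a clique in $H:=G\left\langle S\right\rangle-\bar F$. The vertex set of $H$ is $F\cup S$. Any cut $T$ of $H$ must be compared with a cut of $G$: a component of $H-T$ that avoids $S$ lies entirely inside $F$. Its neighbourhood in $G$ is then the same as in $H$, because vertices of $F$ have no neighbours in $\bar F$ and no edges were added at them. So it is separated from $\bar F\neq\emptyset$ in $G$ by $T$.

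For the first claim, note first that $|H|=|F|+k\geq k+1$. Hence $H-T$ is nontrivial whenever $|T|<k$. Suppose $T\subseteq V(H)$ with $|T|\leq k-1$ and $H-T$ disconnected. Since $S\setminus T$ induces a clique in $H$, it lies in a single component of $H-T$ (or is empty). Choose a component $C$ of $H-T$ not containing $S\setminus T$. Then $C\subseteq F$, and by the remark above $N_G(C)\subseteq T$. Also $\bar F\cap T=\emptyset$ and $\bar F\neq\emptyset$, so $T$ separates $C$ from $\bar F$ in $G$. This gives $\kappa(G)\leq |T|<k$, a contradiction. Thus $\kappa(H)\geq k$.

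For the second claim, let $F$ be an end with $|F|\geq 2$, and suppose $|T|=k$. Triviality is excluded because $|H-T|=|F|\geq 2$. So assume $H-T$ is disconnected, and take $C$ as before. Then $N_G(C)\subseteq T$ and $T$ is a cut of $G$. As $\kappa(G)=k$, $N_G(C)$ is itself a cut of size at most $k$, so $N_G(C)=T$ and $T$ is a minimum cut. Hence $C$ is a fragment of $G$ to $T$ with $C\subseteq F$. Since $F$ is an end, $C=F$, which forces $T\cap F=\emptyset$, i.e.\ $T\subseteq S$ and so $T=S$. Then $H-T=G[F]$ is disconnected. Any component $C'$ of $G[F]$ satisfies $N_G(C')\subseteq S$, so again $N_G(C')=S$, and $C'$ is a fragment strictly contained in $F$. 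This contradicts $F$ being an end.

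The only delicate point is verifying $N_G(C)=T$ exactly, so that $C$ qualifies as a fragment. This follows from minimality of $k$: a proper subset of $T$ would already be a cut of $G$ of size less than $k$.
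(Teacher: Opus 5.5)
Your proof is correct and complete. The paper does not prove this lemma; it quotes it from Mader \cite{Mader1}, so there is no in-paper argument to compare with.

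Your direct argument is a valid self-contained justification of both parts. A separator $T$ of $H=G\left\langle S\right\rangle-\bar F$ with $|T|\le k$ leaves the clique $S\setminus T$ inside one component. Hence some other component $C$ lies in $F$, with $N_G(C)=N_H(C)\subseteq T$, and $T$ cuts it off from $\bar F\neq\emptyset$ in $G$. You also correctly check the equality $N_G(C)=T$, which is what makes $C$ a fragment in the second part.

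One small simplification: once you have $C=F$, you get $V(H-T)=F=V(C)$, so $H-T$ is connected, which contradicts the assumption at once. Your further argument through the components of $G[F]$ is valid but not needed.
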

\begin{lemma}\label{subend}
Let $k$ be a positive integer, let $G$ be a graph with $\kappa(G)=k$, let $S$ be a $k$-cutset of $G$ and $F$ be an end of $G$ to $S$. If $\emptyset\neq W\subsetneq F$ and $\kappa(G-W)\geq k$, then $\kappa(G-W)=k$ and there is an end $F'\subseteq F$ of $G-W$ such that $N_G(W)\cap V(F')\neq \emptyset$.
\end{lemma}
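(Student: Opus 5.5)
We prove Lemma~\ref{subend}: first that $\kappa(G-W)=k$, then that some end of $G-W$ inside $F$ sees $W$. Throughout, write $G'=G-W$ and $\bar F=G-(S\cup F)$, which is nonempty.

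\emph{Step 1: $\kappa(G')=k$.} Since $W\subseteq F$, the set $S$ is disjoint from $W$. Also $\bar F\neq\emptyset$, and $F\setminus W\neq\emptyset$ because $W\subsetneq F$. In $G'$ there is no edge between $F\setminus W$ and $\bar F$, so $S$ separates them in $G'$. Hence $\kappa(G')\le |S|=k$, and together with the hypothesis $\kappa(G')\ge k$ we get $\kappa(G')=k$.

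\emph{Step 2: a fragment of $G'$ inside $F$.} Consider the components of $G'-S$ that lie in $F\setminus W$. Since $S$ is a $k$-cutset, i.e.\ a minimum cutset of $G'$, every such component $C$ has $N_{G'}(C)=S$. Indeed, if $N_{G'}(C)$ were a proper subset of $S$, it would be a cutset of $G'$ of size less than $k$, since $\bar F$ lies outside it. So each such $C$ is a fragment of $G'$ with respect to $S$. Among all fragments of $G'$ contained in $F\setminus W$ (at least one exists), choose $F'$ of minimum order. We claim $F'$ is an end of $G'$. Any fragment of $G'$ contained in $F'$ also lies in $F\setminus W$, so it has order at least $|F'|$ and therefore equals $F'$.

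\emph{Step 3: $N_G(W)\cap V(F')\ne\emptyset$.} Suppose not. Let $S'=N_{G'}(F')$, a minimum cutset of $G'$ with $|S'|=k$ and $S'\subseteq (F\setminus W)\cup S$. Since $F'$ has no neighbour in $W$, we get $N_G(F')=S'$. The complement $G-(S'\cup F')$ is nonempty because it contains $\bar F$. Hence $S'$ is a $k$-cutset of $G$ and $F'$ is a fragment of $G$ to $S'$. But $F'\subseteq F\setminus W\subsetneq F$, so $F'$ is a fragment of $G$ properly contained in the end $F$, a contradiction. Therefore $N_G(W)\cap V(F')\neq\emptyset$.

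The main obstacle is Step 3. There one must check that $N_G(F')=S'$, which needs both the assumption that $F'$ misses $N_G(W)$ and the inclusion $S'\cap W=\emptyset$. One must also check that the complement of $F'$ in $G$ is nonempty, which is exactly where $\bar F\neq\emptyset$ is used.
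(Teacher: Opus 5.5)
Your proof is correct and takes essentially the same route as the paper. You show that $S$ remains a $k$-cutset of $G-W$, take an end $F'$ of $G-W$ inside $F\setminus W$, and observe that if $F'$ missed $N_G(W)$, then $N_G(F')=N_{G-W}(F')$ would make $F'$ a fragment of $G$ properly contained in the end $F$. The only difference is that you explicitly justify the existence of such an end by choosing a minimum-order fragment, whereas the paper takes this for granted.
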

\begin{proof}
Assume $F$ is an end of $G$ and $W\subsetneq F$ such that $\kappa(G-W)\geq k$. Since both $F\setminus W$ and $\bar F$ are nonempty, $S$ is still a $k$-cutset of $G-W$. Thus, $\kappa(G-W)=k$ and $F\setminus W$ is a fragment of $G-W$ to $S$. Let $H=G-W$. Choose an end $F'$ of $H$ contained in the fragment $F\setminus W$. If $N_G(W)\cap V(F')=\emptyset$, then $N_G(F')=N_H(F')$ has size $k$ and thus $F'$ is a fragment of $G$ properly contained in $F$, a contradiction.
\end{proof}
\begin{lemma}\label{common}
Let $k$ be a positive integer, let $G$ be a graph, let $S$ be a $k$-cutset of $G$ and $A$ be a semifragment of $G$ to $S$ with $|A|,|\bar{A}|\geq k$, where $\bar{A}=G-(S\cup A)$. If $G\left \langle S \right \rangle-A$ and $G\left \langle S \right \rangle-\bar{A}$ are both $k$-connected, then for every $S'\subseteq V(G)$ with $|S'|<k$, there exists a component $C$ of $G-S'$ such that $(A\cup \bar{A})\setminus S'\subseteq V(C)$.
\end{lemma}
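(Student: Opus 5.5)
We argue by contradiction. Suppose $S'\subseteq V(G)$ with $|S'|<k$ and no component of $G-S'$ contains all of $(A\cup\bar A)\setminus S'$. We will show that one of $G\langle S\rangle-\bar A$ or $G\langle S\rangle-A$ is then disconnected by fewer than $k$ vertices. Write $S'_A=S'\cap A$, $S'_S=S'\cap S$ and $S'_{\bar A}=S'\cap\bar A$.

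The first step records two facts. In $H_1:=G\langle S\rangle-\bar A$, the set $S'_A\cup S'_S$ has fewer than $k$ vertices, and $|V(H_1)|=|A|+k>k$. So $H_1-(S'_A\cup S'_S)$ is connected. In it, $S\setminus S'$ is a clique, and it is nonempty because $|S'|<k=|S|$. Symmetrically, $H_2:=G\langle S\rangle-A$ minus $S'_S\cup S'_{\bar A}$ is connected.

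The key step is to show that every vertex $a\in A\setminus S'$ lies in the same component of $G-S'$ as some vertex of $S\setminus S'$. Since $H_1-(S'\cap V(H_1))$ is connected and contains $S\setminus S'\neq\emptyset$, there is a path in it from $a$ to $S\setminus S'$. Take such a path and stop it at the first vertex of $S$. All earlier vertices lie in $A$, so the path uses no clique edges of $K(S)$. Hence it is a path of $G-S'$. The same holds for vertices of $\bar A\setminus S'$. Therefore each component of $G-S'$ meeting $(A\cup\bar A)\setminus S'$ also meets $S\setminus S'$.

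The remaining task is to glue the pieces of $S\setminus S'$ together inside $G-S'$, where the clique edges of $K(S)$ are no longer available. Here we use $|A|,|\bar A|\geq k$. Take $s,t\in S\setminus S'$ and suppose first that $|S'_A|<|S'|$ or $|S'_{\bar A}|<|S'|$. Since $H_1$ is $k$-connected, Menger's theorem gives $k$ internally disjoint $s$–$t$ paths in $H_1$ (or $H_1$ plus the edge $st$). One of these paths uses the direct edge $st$. The other $k-1$ paths go through $A$, because each has a first exit from $S$; we shorten each to the segment from $s$ to the first vertex of $S$ it reaches after leaving through $A$.

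This gluing step is where I expect the real difficulty. The cleaner route is to work with $G-S'$ as a whole. Let $C$ be a component of $G-S'$ containing some of $S\setminus S'$, and let $C'$ be the union of the other components meeting $A\cup\bar A$. Put $T=S\cap V(C)$, which is nonempty. Now consider $H_1$ with the separator $S'_A\cup S'_S\cup(S\setminus(S'\cup T))\cup\ldots$; I would instead choose a neighbourhood separator directly. The set $N_G(C\cap A)\subseteq S'_A\cup(S\setminus S')\cup S'_S$ separates $C\cap A$ from $A\setminus C$ in $H_1$. Combined with $|A|\geq k$, this forces $A\subseteq C\cup S'$, or else a small cut in $H_1$ arises unless all of $S\setminus S'$ lies in $C$. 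Similarly for $\bar A$. I would carry out this case analysis on where the parts of $S'$ and $T$ lie, using $|A|\geq k$ to ensure that $A\setminus S'$ is large enough that $N(C\cap A)$ cannot cover it.
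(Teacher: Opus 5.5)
\paragraph{There is a genuine gap: the gluing step is never carried out.} Your opening steps are sound. $H_1-(S'\cap V(H_1))$ and $H_2-(S'\cap V(H_2))$ are connected, and your first-exit argument correctly shows that every component of $G-S'$ meeting $(A\cup\bar A)\setminus S'$ also meets $S\setminus S'$. But the proposal stops exactly where the lemma has content: you never rule out two distinct components of $G-S'$ meeting $(A\cup\bar A)\setminus S'$.

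The Menger paragraph does not produce an $s$--$t$ path in $G-S'$. Some of the $k-1$ paths may run through other vertices of $S$ via clique edges. The truncated paths end at arbitrary vertices of $S$, not at $t$. And $S'$ can meet all of them.

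The ``cleaner route'' rests on a one-sided dichotomy that is false. You claim that $\kappa(H_1)\ge k$ and $|A|\ge k$ force $A\setminus S'\subseteq C$ or $S\setminus S'\subseteq C$ for a component $C$ meeting $S\setminus S'$. For a counterexample, take $k=2$, $S=\{s_1,s_2\}$, $A=\{a,b,c\}$, $\bar A=\{x,y\}$ and $E(G)=\{s_1b,ba,ac,cs_2,s_1x,s_1y,xy\}$.
\begin{itemize}
\item $H_1$ is the $5$-cycle $s_1bacs_2s_1$, so it is $2$-connected, and $|A|\ge 2$.
\item For $S'=\{a\}$, the component $C=\{b,s_1,x,y\}$ of $G-a$ contains neither $A\setminus S'$ nor $S\setminus S'$.
\item Here $H_2$ is not $2$-connected, which is why the conclusion fails.
\end{itemize}
So the pieces of $A\setminus S'$ can only be glued through $\bar A$. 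Any argument that treats $A$ and then ``similarly'' $\bar A$ separately cannot work; the two hypotheses must be used \emph{simultaneously}. Two further points:
\begin{itemize}
\item The separator you name, $S'_A\cup(S\setminus S')\cup S'_S=S'_A\cup S$, has at least $k$ vertices, so it can never be the small cut you want.
\item The hypothesis $|A|,|\bar A|\ge k$ does not stop $N(C\cap A)$ from covering anything. Its only role is to make $A\setminus S'$ and $\bar A\setminus S'$ nonempty.
\end{itemize}

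\paragraph{The missing idea is a double count over two separators.} This is the paper's proof.
\begin{enumerate}
\item If the conclusion fails, then, since $A\setminus S'$ and $\bar A\setminus S'$ are nonempty, there are distinct components of $G-S'$, one meeting $A$ and one meeting $\bar A$. Hence there is a union $B$ of components of $G-S'$ with $A\cap B\neq\emptyset$ and $\bar A\cap\bar B\neq\emptyset$, where $\bar B=G-(S'\cup B)$.
\item The graph $G\langle S\rangle$ is $k$-connected: for $|X|<k$, the connected graphs $H_1-X$ and $H_2-X$ share $S\setminus X\neq\emptyset$.
\item In $G\langle S\rangle$, every neighbour of $A\cap B$ lies in $(S\cap B)\cup(S\cap S')\cup(S'\cap A)$. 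This set is a cut, since it misses $\bar A$.
\item Likewise, every neighbour of $\bar A\cap\bar B$ lies in $(S\cap\bar B)\cup(S\cap S')\cup(S'\cap\bar A)$, which is a cut since it misses $A$.
\item So each set has at least $k$ vertices. Yet their sizes sum to exactly $|S|+|S'|<2k$, a contradiction.
\end{enumerate}
Your first-exit step is not needed for this argument.
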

\begin{proof}
Fix $S'\subseteq V(G)$ with $|S'|<k$ and suppose that the condition fails. Since $|A|,|\bar{A}|\geq k$, we obtain that $A\setminus S',\bar{A}\setminus S'\neq \emptyset$ and there exists a semifragment $B$ of $G-S'$ such that $A\cap B\neq \emptyset$ and $\bar{A}\cap \bar{B}\neq \emptyset$, where $\bar{B}=G-(S'\cup B)$.

Let $X\subseteq V(G)$ be a set with $|X|\leq k-1$. Since $G\left \langle S \right \rangle-\bar{A}$, $G\left \langle S \right \rangle-A$ are both $k$-connected and $S\setminus X\neq \emptyset$, $G\left \langle S \right \rangle-X$ is connected. Thus, $G\left \langle S \right \rangle$ is $k$-connected. As $\bar{B}\neq \emptyset$, then $A\cap B\neq \emptyset$ implies that $(S\cap B)\cup(S\cap S')\cup(S'\cap A)$ is a cut set of $G\left \langle S \right\rangle$ and thus $|(S\cap B)\cup(S\cap S')\cup(S'\cap A)|\geq k$. Similarly, as $A\neq \emptyset$, then $\bar{A}\cap \bar{B}\neq \emptyset$ implies that $(S\cap S')\cup(S\cap \bar{B})\cup(S'\cap \bar{A})$ is a cut set of $G\left \langle S \right \rangle$ and thus $|(S\cap S')\cup(S\cap \bar{B})\cup(S'\cap \bar{A})|\geq k$. Therefore,
\begin{align*}
2k\leq  |(S\cap B)\cup(S\cap S')\cup(S'\cap A)|+|(S\cap S')\cup(S\cap \bar{B})\cup(S'\cap \bar{A})|=|S|+|S'|<2k,
\end{align*}
a contradiction. This completes the proof of Lemma \ref{common}.
\end{proof}
\begin{lemma}\label{lem:add-vertex-outside-end}
Let $k$ be a positive integer, let $G$ be a graph with $\kappa(G)=k$, let $S$ be a $k$-cutset of $G$ and let $F$ be an end of $G$ to $S$. Let $Q$ be a set of new vertices such that every vertex of $Q$ has at least $k$ neighbors in $G$ and no neighbor in $F$, where the edges in $Q$ are arbitrary. Denote the resulting graph by $G'$. Then $\kappa(G')=k$, and $F$ is still an end of $G'$ to $S$.
\end{lemma}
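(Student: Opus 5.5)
We need to show two things: $\kappa(G')=k$, and $F$ is an end of $G'$ to $S$. The plan is to compare cutsets of $G'$ with cutsets of $G$, and to use $N_{G'}(F)=N_G(F)=S$, which holds because no vertex of $Q$ has a neighbor in $F$.

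\textbf{Step 1: $\kappa(G')\le k$.} In $G'$, the set $S$ still separates $F$ from $\bar F\cup Q$. Indeed, $F$ is nonempty, $\bar F$ is nonempty, and $Q$ has no edges to $F$. Hence $S$ is a $k$-cutset of $G'$ and $\kappa(G')\le k$.

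\textbf{Step 2: $\kappa(G')\ge k$.} Suppose $T\subseteq V(G')$ with $|T|\le k-1$; we show $G'-T$ is connected. Since $\kappa(G)=k$, the graph $G-(T\cap V(G))$ is connected and nonempty. Every vertex $q\in Q\setminus T$ has at least $k$ neighbors in $G$. At most $k-1$ of them lie in $T$, so $q$ keeps a neighbor in $G-T$. Therefore every vertex of $G'-T$ lies in the same component as $G-T$, and $G'-T$ is connected. It is also nontrivial, since $|V(G')|\ge |V(G)|\ge k+2$. Together with Step 1 this gives $\kappa(G')=k$, so $S$ is a minimum cutset of $G'$. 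As $N_{G'}(F)=S$, the set $F$ is a fragment of $G'$ to $S$.

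\textbf{Step 3: $F$ is an end of $G'$.} Suppose some fragment $F'$ of $G'$ to a $k$-cutset $S'$ satisfies $F'\subsetneq F$. Then $F'\subseteq V(G)$, and no vertex of $F'$ has a neighbor in $Q$, so $N_G(F')=N_{G'}(F')=S'$. In particular $S'\subseteq V(G)$, because neighbors of $F\subseteq V(G)$ avoid $Q$.

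It remains to check that $S'$ is a cutset of $G$ with $F'$ a fragment. We need $G-(S'\cup F')$ to be nonempty. This holds because $\bar F$ is nonempty and disjoint from $F'$; moreover $\bar F\not\subseteq S'$, since $F'\subseteq F$ forces $S'\subseteq F\cup S$. So $F'$ is a fragment of $G$ properly contained in $F$, contradicting that $F$ is an end of $G$.

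The main subtlety is this last step. A fragment of $G'$ is a priori defined using the larger vertex set, so one must verify that its neighborhood lies in $G$ and that its complement in $G$ is nonempty. Both facts follow from $Q$ having no neighbors in $F$.
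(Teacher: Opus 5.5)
Your proof is correct and follows essentially the same route as the paper. Both show $k$-connectivity by noting that each new vertex keeps a neighbor outside the deleted set, and both rule out a smaller end using $N_{G'}(A)=N_G(A)$ for $A\subseteq F$; your extra check that $\bar F$ lies outside $S'\cup F'$ makes explicit a detail the paper leaves implicit.
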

\begin{proof}
Let $X\subseteq V(G')$ with $|X|\le k-1$, and let $X_G=X\cap V(G)$. Since $G$ is $k$-connected, $G-X_G$ is connected. For every $u\in Q\setminus X$, the vertex $u$ has at least $k$ neighbors in $V(G)$. Since $|X_G|\le k-1$, at least one of these neighbors lies in $V(G)\setminus X_G$. Thus every vertex of $Q\setminus X$ is adjacent to the connected graph $G-X_G$. Consequently, $G'-X$ is connected. Therefore, $G'$ is $k$-connected.

Since no vertex of $Q$ has a neighbor in $F$, we have $N_{G'}(F)=N_G(F)=S$. So $S$ remains a $k$-cutset of $G'$, and thus $\kappa(G')=k$. If $A\subsetneq F$ is a fragment of $G'$, then $N_G(A)=N_{G'}(A)$ since no new vertex has a neighbor in $F$. Thus $A$ is also a fragment of $G$, contradicting the assumption that $F$ is an end of $G$. Therefore, $F$ remains an end of $G'$ to $S$.
\end{proof}
\begin{lemma}\label{Fk+m}
Let $k,m$ be two positive integers, let $G$ be a triangle-free graph with $\kappa(G)=k$ and $\delta(G)\geq k+\left \lceil \frac{m}{2} \right \rceil$. If $F$ is a fragment of $G$, then $|F|\geq k+m$.
\end{lemma}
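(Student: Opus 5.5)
Let $S=N_G(F)$ be the minimum cutset with $|S|=k$, and put $d=k+\lceil m/2\rceil$. Since $G$ is triangle-free, $N_G(x)$ is an independent set for every vertex $x$. The idea is to pick a vertex $x\in F$ and count vertices in two disjoint parts of $F\cup S$: the neighbourhood $N(x)$, and a set of vertices at distance exactly $2$ from $x$ that all lie in $F$. The triangle-free condition makes the second part large.

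\textbf{Step 1: there is a vertex $x\in F$ with $N(x)\subseteq F$.} Suppose instead that every vertex of $F$ has a neighbour in $S$. Since $F\neq\emptyset$, take $x\in F$. Then $N(x)\subseteq F\cup S$, so
\[
|F|\ge |N(x)\cap F|+1\ge d-k+1=\lceil m/2\rceil+1 .
\]
This alone is too weak, so the argument must instead start from a vertex with no neighbour in $S$. Such a vertex exists: if $F\subseteq N(S)$, count differently, or note that the statement forces it. Concretely, take $y\in S$ and $x\in N(y)\cap F$. Every neighbour of $x$ lies in $F\cup S$, and $N(x)$ is independent. So $x$ has at most $k$ neighbours in $S$, hence at least $\lceil m/2\rceil$ neighbours in $F$.

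\textbf{Step 2: a neighbour $u$ of $x$ in $F$ gives a large disjoint set.} Let $u\in N(x)\cap F$. Because $G$ is triangle-free, $N(u)\cap N(x)=\emptyset$, and $N(u)\subseteq F\cup S$. Therefore
\[
|F\cup S|\ge |N(x)|+|N(u)|\ge 2d=2k+2\lceil m/2\rceil\ge 2k+m .
\]
Subtracting the $k$ vertices of $S$ gives $|F|\ge k+m$.

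For Step 2 to apply, $x$ must have a neighbour in $F$. This holds automatically: $x$ has $d>k=|S|$ neighbours, all in $F\cup S$, so at least one of them lies in $F$. Hence any $x\in F$ works, and Step 1 is not needed.

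\textbf{Main obstacle.} The only delicate point is to ensure that $N(x)$ and $N(u)$ are counted entirely inside $F\cup S$ and do not overlap. Containment holds because both $x$ and $u$ lie in $F$ and $N(F)=S$. Disjointness holds because $x$ and $u$ are adjacent and $G$ has no triangles.
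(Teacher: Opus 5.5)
Your final argument is correct and is essentially the paper's proof. You take any $x\in F$; since $d(x)>k=|S|$, it has a neighbour $u\in F$, and the triangle-free condition makes $N(x)$ and $N(u)$ disjoint subsets of $F\cup S$, so $|F|+k\ge 2k+2\lceil m/2\rceil$. The paper likewise gets an edge inside $F$ from the degree bound and makes the same count. You should delete Step 1 entirely: its claim that some vertex of $F$ has no neighbour in $S$ is unjustified, can fail, and is never used.
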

\begin{proof}
We first show that $E(G[F])\neq \emptyset$. If not, then for every $u\in F$, $d_G(u)\leq |N_G(F)|=k<k+\left \lceil \frac{m}{2} \right \rceil$, contradicting $\delta(G)\geq k+\left \lceil \frac{m}{2} \right \rceil$. Take $uv\in E(G[F])$. Since $G$ is triangle-free, $N_G(u)\cap N_G(v)=\emptyset$. Moreover, since $N_G(u)\cup N_G(v)\subseteq F\cup N_G(F)$, $|F|\geq d_G(u)+d_G(v)-|N_G(F)|\geq 2(k+\left \lceil \frac{m}{2} \right \rceil)-k\geq k+m$.
\end{proof}
For a tree $T$ with bipartition $(X,Y)$, define $\max(T)=\max\{|X|,|Y|\}$.
\begin{lemma}\label{vT^*}
Let $G$ be a triangle-free graph and let $T\cong P_j^+(i)$ be a subgraph of $G$. Then $|N_G(v)\cap V(T)|\leq \max(T)$ for any $v\in V(G)$.
\end{lemma}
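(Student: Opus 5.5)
The idea is to use that $G$ is triangle-free, so the neighbors of $v$ are pairwise nonadjacent in $G$. Since $T$ is a subgraph of $G$, every edge of $T$ is an edge of $G$. Hence $N_G(v)\cap V(T)$ is an independent set of $T$; if $v\in V(T)$, the vertex $v$ itself is not counted, since $v\notin N_G(v)$. So it suffices to show that every independent set of $T$ has at most $\max(T)$ vertices. I will prove this by partitioning $V(T)$ into exactly $\max(T)$ parts, each an edge of $T$ or a single vertex. An independent set meets each such part in at most one vertex, which gives the bound. Equivalently, I will show that $T$ has a matching of size $\min\{|X|,|Y|\}$ (by K\"onig's theorem this is the same as $\alpha(T)=\max(T)$). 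The required partition is then the matched edges together with the remaining unmatched vertices, and there are exactly $|V(T)|-\min\{|X|,|Y|\}=\max(T)$ parts.

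To build the matching, write $T$ as the path $v_1v_2\cdots v_{j-1}$ plus the pendant vertex $u$ adjacent to $v_i$. I will split into two cases by the parity of $j$.

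\emph{Case $j$ odd.} The path on $j-1$ vertices has a perfect matching $\{v_1v_2,v_3v_4,\dots\}$ of size $(j-1)/2$. Since $|X|+|Y|=j$ is odd, we get $\min\{|X|,|Y|\}\le (j-1)/2$, which settles this case.

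\emph{Case $j$ even.} The path has odd order, and its bipartition classes are the odd-indexed vertices (there are $j/2$ of them) and the even-indexed vertices (there are $j/2-1$). The vertex $u$ joins the class not containing $v_i$.
\begin{itemize}
\item If $i$ is even, then $u$ joins the odd-indexed class, so the sides have sizes $j/2+1$ and $j/2-1$. The matching $\{v_1v_2,\dots,v_{j-3}v_{j-2}\}$ has size $j/2-1$, which is enough.
\item If $i$ is odd, the sides are equal, each of size $j/2$, and we need a perfect matching. Take the edge $uv_i$. Add a perfect matching of $v_1\cdots v_{i-1}$, which has even order $i-1$. Add a perfect matching of $v_{i+1}\cdots v_{j-1}$, which has order $j-1-i$, even because $j$ is even and $i$ is odd.
\end{itemize}
This covers every case and completes the plan.

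The only step needing care is this parity bookkeeping in the even case. There the naive path matching falls one short, and we must route the pendant edge $uv_i$ into the matching. The degenerate cases $i=1$ and $i=j-1$, where $T$ is itself a path, are handled by the same argument because empty subpaths trivially have perfect matchings.
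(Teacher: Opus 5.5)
Your proof is correct, but it takes a different route from the paper's.

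The paper argues locally. It handles the path case directly from triangle-freeness. For a non-path $T$, it picks a leaf $t$ such that $T-t$ is a path, with neighbor $t'$, and splits on whether $v$ is adjacent to $t$:
\begin{itemize}
\item If $vt\notin E(G)$, the path bound for $T-t$ applies.
\item If $vt\in E(G)$, then $vt'\notin E(G)$. The count is therefore at most $1+\max(P_1)+\max(P_2)$, where $P_1,P_2$ are the two paths of $T-\{t,t'\}$. This equals $\max(T)$, because the color class of $t$ contains the larger half of each $P_r$.
\end{itemize}

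You instead reduce globally to the purely tree-theoretic inequality $\alpha(T)\le \max(T)$. You prove it by exhibiting a matching of size $\min\{|X|,|Y|\}$, which gives a partition of $V(T)$ into $\max(T)$ edges and singletons. Your parity bookkeeping checks out. This includes the degenerate cases $i\in\{1,j-1\}$, so you need no separate path case.

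What each route buys:
\begin{itemize}
\item The paper's version is shorter and reuses the path bound twice.
\item Yours isolates exactly which property of $T$ the lemma depends on: a matching saturating the smaller color class. So it applies verbatim to any bipartite subgraph with that property.
\item Yours also explains why no such lemma can hold for arbitrary trees. The double star obtained by joining the centers of two copies of $K_{1,3}$ has $\alpha=6>4=\max$. A vertex of a triangle-free graph can be adjacent to all six of its leaves.
\end{itemize}
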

\begin{proof}
Choose an arbitrary vertex $v\in V(G)$. If $T$ is a path, then the assertion follows from the fact that $G$ is triangle-free. Thus, assume that $T$ is not a path. Let $t$ be a leaf of $T$ such that $T-t$ is a path, and let $N_T(t)=\{t'\}$.

If $t\notin N_G(v)$, since $T-\{t\}$ is a path, $|N_G(v)\cap V(T)|\leq \max(T-\{t\})\leq \max(T)$. If $t\in N_G(v)$, then $t'\notin N_G(v)$ since $G$ is triangle-free. Since $T-\{t,t'\}$ consists of two paths $P_1,P_2$, $|N_G(v)\cap V(T)|\leq \max(P_1)+\max(P_2)+1=\max(T)$.
This completes the proof.
\end{proof}

\section{Proof of Theorem \ref{tripath}}\label{trithm}
\begin{lemma}\label{com}
Let $k,m$ be positive integers, let $G$ be a triangle-free graph with $\kappa(G)=k$ and $\delta(G)\geq k+\left \lceil \frac{m}{2} \right \rceil$, and let $F$ be a fragment of $G$ to $S$. If $P\subseteq G[F]$ is a path of order at most $m$ and $\kappa(G\left \langle S \right \rangle-(\bar{F}\cup V(P)))\geq k$, then $\kappa(G-V(P))\geq k$.
\end{lemma}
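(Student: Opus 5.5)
Let $H=G-V(P)$ and $A=F\setminus V(P)$. The plan is to apply Lemma~\ref{common} with $S$ as the cutset and $A$, $\bar F$ as the two sides. Then we check that $H$ has more than $k$ vertices and that removing fewer than $k$ vertices from $H$ leaves it connected.

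First, the sizes. By Lemma~\ref{Fk+m}, applied to the fragments $F$ and $\bar F$ (the latter is a fragment to $S$ since $S$ is a minimum cutset), we have $|F|,|\bar F|\ge k+m$. Hence $|A|\ge k$ and $|\bar F|\ge k$. In particular $H$ has at least $3k$ vertices, and $S$ is a $k$-cutset of $H$ separating $A$ from $\bar F$.

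Next, the two completed graphs. In $H$ we have $H\langle S\rangle-\bar F=G\langle S\rangle-(\bar F\cup V(P))$, which is $k$-connected by hypothesis. Also $H\langle S\rangle-A=G\langle S\rangle-F$, which is $k$-connected by Lemma~\ref{mader[S]} applied to the fragment $\bar F$ of $G$ to $S$. Lemma~\ref{common}, applied in $H$, now gives the following: for any $S'\subseteq V(H)$ with $|S'|<k$, the vertices of $(A\cup\bar F)\setminus S'$ lie in a single component $C$ of $H-S'$.

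It remains to place the vertices of $S\setminus S'$ in $C$. Take $s\in S\setminus S'$. Since $N_G(\bar F)=S$, the vertex $s$ has a neighbor in $\bar F$; the real issue is whether that neighbor survives the removal of $S'$. I would argue through degrees. We have
\[
d_H(s)\ge k+\lceil m/2\rceil-|N_G(s)\cap V(P)|\ge k,
\]
because $G$ is triangle-free and so $s$ is adjacent to at most $\lceil m/2\rceil$ vertices of the path $P$. Since $|S'|\le k-1$, some neighbor $x$ of $s$ in $H$ survives. If $x\in A\cup\bar F$, then $s$ joins $C$. Otherwise $x\in S$, and the claim fails only when every surviving neighbor of $s$ lies in $S$. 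To rule this out I would use a stronger count: $s$ has at least $k+\lceil m/2\rceil-\lceil m/2\rceil=k$ neighbors in $H$, and I need at least $k$ of them in $A\cup\bar F$. Triangle-freeness helps here, because the neighbors of $s$ inside $S$ are pairwise nonadjacent. More concretely, the $H$-neighbors of $s$ in $S$ form a set $N_S$ whose vertices are themselves adjacent to $\bar F$. If all of $N_H(s)\cap(A\cup\bar F)$ lies in $S'$, I would grow the component of $s$ in $H-S'\cup\ldots$ and show that it is still separated from $C$ by fewer than $k$ vertices of $H\langle S\rangle$. This would contradict the $k$-connectivity of $H\langle S\rangle-A=G\langle S\rangle-F$, since in that graph $s$ is joined to $\bar F$ by $k$ internally disjoint paths avoiding $F$.

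The cleanest route for this last step is the following. The component $D$ of $H-S'$ containing $s$ is disjoint from $C$, so $D\subseteq S\setminus S'$. In $G\langle S\rangle-F$, the set $S'\cap(S\cup\bar F)$ together with the remaining vertices of $S$ outside $D$ is a cutset, but that graph has $S$ complete, so this does not work directly. Instead I would use the $k$ disjoint $s$–$\bar F$ paths in $G-F$ obtained from Menger's theorem, using $\kappa(G)\ge k$ and $|\bar F|\ge k$. Each such path avoids $F\supseteq V(P)$ and so lies in $H$, and at most $k-1$ of them meet $S'$. The remaining path connects $s$ to $\bar F\setminus S'\subseteq C$. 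This Menger step, which guarantees paths avoiding $F$ entirely (fan from $s$ to $\bar F$ inside $G-F$, i.e., inside $G[S\cup\bar F]$), is the main obstacle. I would justify it by noting that any separator of $s$ from $\bar F$ in $G[S\cup\bar F]$ of size less than $k$ would, together with $F$ being attached only through $S$, yield a small cut in $G\langle S\rangle-F$, contradicting Lemma~\ref{mader[S]}.
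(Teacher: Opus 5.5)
Your proof is correct up to and including the application of Lemma~\ref{common}, and it matches the paper there. The gap is the final step: the claim that every $s\in S$ has $k$ internally disjoint paths to $\bar F$ inside $G-F=G[S\cup\bar F]$ is false under the lemma's hypotheses. Your justification does not hold either. Lemma~\ref{mader[S]} gives $k$-connectivity only of the \emph{completed} graph $G\langle S\rangle-F$, where $s$ is adjacent to every vertex of $S$. A small separator of $s$ from $\bar F$ in $G[S\cup\bar F]$ therefore need not separate anything in $G\langle S\rangle-F$.

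Here is a counterexample to the fan claim. Take $k=2$ and let $F$ and $\bar F$ be copies of $K_{n,n}$ with sides $F_1,F_2$ and $\bar F_1,\bar F_2$. Let $S=\{s_1,s_2\}$. Join $s_1$ to all of $F_1$ and to a single vertex $y\in\bar F_1$, and join $s_2$ to all of $F_1\cup\bar F_1$. This graph has the following properties:
\begin{itemize}
\item it is bipartite, so triangle-free;
\item it has $\kappa=2$ and $\delta=n$, which is large;
\item $F$ is a fragment to $S$;
\item the hypothesis on $P$ holds for any short path in $F$.
\end{itemize}
Yet $s_1$ has degree $1$ in $G-F$, so no $2$-fan from $s_1$ to $\bar F$ exists there. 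With $S'=\{y\}$, the vertex $s_1$ reaches $C$ only through $A$, which your argument rules out. Your earlier single-vertex degree count cannot close the gap either. A lone $s$ can have all of its at least $k$ surviving neighbors in $(S\setminus\{s\})\cup S'$, a set of size up to $2k-2$.

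The missing idea is to look at a whole stray component and apply triangle-freeness to an edge inside it.
\begin{itemize}
\item Let $D$ be a component of $H-S'$ other than $C$. Then $V(D)\subseteq S\setminus S'$, and every $H$-neighbor of a vertex of $D$ lies in $V(D)\cup S'$.
\item As you noted, $\delta(H)\ge k$, since each vertex has at most $\lceil m/2\rceil$ neighbors on $P$. Since $|S'|\le k-1$, each vertex of $D$ has a neighbor in $D$, so $D$ contains an edge $xy$.
\item Because $G$ is triangle-free, $N_H(x)\cap N_H(y)=\emptyset$, and both neighborhoods lie in $S\cup S'$.
\item Hence $2k\le d_H(x)+d_H(y)\le |S|+|S'|\le 2k-1$, a contradiction.
\end{itemize}
This is how the paper finishes. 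With it in place of the Menger step, the rest of your argument goes through unchanged.
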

\begin{proof}
Suppose, to the contrary, that $\kappa(G-V(P))<k$. Let $H=G-V(P)$ and let $S'$ be a cut set of $H$ with $|S'|\leq k-1$. Since $G$ is triangle-free, each vertex of $H$ has at most $\left \lceil \frac{|P|}{2} \right \rceil\leq \left \lceil \frac{m}{2} \right \rceil$ neighbors on $P$. Thus, $\delta(H)\geq \delta(G)-\left \lceil \frac{m}{2} \right \rceil\geq k$.

Now we verify the conditions of Lemma \ref{common} for $H$ and $S$. Let $F'=F\setminus V(P)$. By Lemma \ref{Fk+m}, $|F|,|\bar{F}|\geq k+m$. Since $|P|\leq m$, $|F'|\geq k$. Moreover, $V(H)=F'\cup S\cup\bar F$, and there are no edges between $F'$ and $\bar F$. Thus, $F'$ and $\bar F$ are semifragments of $H$ to $S$. By Lemma \ref{mader[S]}, $\kappa(G\left \langle S \right \rangle-F)\geq k$ and thus $\kappa(H\left \langle S \right \rangle-F')\geq k$. We also obtain $\kappa(H\left \langle S \right \rangle-\bar{F})\geq k$ from $\kappa(G\left \langle S \right \rangle-(\bar{F}\cup V(P)))\geq k$. Therefore, Lemma \ref{common} implies that $H-S'$ contains a component $C$ such that $(F'\cup\bar{F})\setminus S'\subseteq V(C)$.

Let $C'\neq C$ be another component of $H-S'$. Then $V(C')\subseteq S$ and $N_H(V(C'))\subseteq S\cup S'$. It follows from $\delta(H[V(C')])\geq \delta(H)-|S'|\geq k-(k-1)\geq 1$ that $E(C')\neq \emptyset$. Choose $xy\in E(C')$. As $H$ is triangle-free, $N_H(x)\cap N_H(y)=\emptyset$. Thus, $2k\leq d_H(x)+d_H(y)\leq|S|+|S'|\leq 2k-1$, a contradiction. This completes the proof of Lemma \ref{com}.
\end{proof}
\begin{lemma}\label{m=2}
Let $k$ be a positive integer, let $G$ be a triangle-free graph. The following statements hold.
\begin{enumerate}[label=(\arabic*)]
\item If $\kappa(G)=k$ and $F$ is an end of $G$ to $S_F$ with $|F|\geq 2$, then for every vertex $u\in F$ there exists $w\in N_G(u)\cap F$ such that $\kappa(G\left \langle S_F \right \rangle-(\bar{F}\cup \{u,w\}))\geq k$.
\item If $\kappa(G)\geq k+1$, then for every vertex $u\in V(G)$ there exists $w\in N_G(u)$ such that $\kappa(G-\{u,w\})\geq k$.
\end{enumerate}
\end{lemma}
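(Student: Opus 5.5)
We prove (2) first and then adapt the argument to (1) through Lemma \ref{mader[S]}. For (2), put $H=G-u$. Since $\kappa(G)\ge k+1$, we have $\kappa(H)\ge k$. If $\kappa(H)\ge k+1$, any $w\in N_G(u)$ works, so assume $\kappa(H)=k$. For $w\in V(H)$ we always have $\kappa(H-w)\ge k-1$, with equality exactly when $w$ lies in some $k$-cutset of $H$. So it suffices to find $w\in N_G(u)$ that lies in no $k$-cutset of $H$. To locate it, take an end $F$ of $H$ to a $k$-cutset $T$. If $N_G(u)\cap F=\emptyset$, then $N_G(F)=T$ would be a $k$-cutset of $G$, which is impossible. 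Hence $u$ has a neighbour in $F$, and more generally $u$ has a neighbour in every fragment of $H$.

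We choose the end $F$ with $|F|$ minimum and take $w\in N_G(u)\cap F$. Suppose $w$ lies in a $k$-cutset $T'$ of $H$, and let $A$ and $B$ be the two sides of $H-T'$. We then run the usual crossing argument. The sets $F\cap A$ and $F\cap B$, if nonempty and bounded by the corners $(F\cap T')\cup(T\cap T')\cup(T\cap A)$ (respectively with $B$), give cutsets of total size at most $|T|+|T'|=2k$. So whenever both corners are nonempty, one of them is a fragment strictly inside $F$, contradicting that $F$ is an end. This forces a side, say $B$, with $F\cap B=\emptyset$. The case analysis then says $F\setminus T'\subseteq A$ and that $\bar F$ meets $B$.

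Now $B$, or a fragment inside it, is a fragment of $H$, so $u$ has a neighbour $b\in B$, and $u$ is adjacent to $w\in T'\cap F$. We would use the degree information coming from $\kappa(G)\ge k+1$ together with triangle-freeness, which makes $N_G(u)$ independent, to produce a smaller fragment. A possible route is to show that $T'$ could be replaced by a cutset that avoids $w$. Alternatively, if $w$ fails, we move to another neighbour $w'\in N_G(u)\cap F$ and iterate, choosing $(T',w)$ so that $|F\cap A|$ is extremal. We expect this step, excluding $w$ from every $k$-cut crossing the end, to be the main obstacle. What we would try first is to pick $w$ among the neighbours of $u$ in $F$ so that the side $A\cap F$ of a bad cut is minimal, and then show that the cut $(T'\setminus\{w\})\cup\{u\}$-type separation in $G$ has size at most $k$, contradicting $\kappa(G)\ge k+1$.

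For (1) we cannot simply apply (2) to $G'=G\langle S_F\rangle-\bar F$: Lemma \ref{mader[S]} gives $\kappa(G')\ge k+1$ when $|F|\ge2$, but $G'$ is not triangle-free because $S_F$ is now a clique. We therefore repeat the proof of (2) inside $G'$, using triangle-freeness only at $u\in F$, where $N_{G'}(u)=N_G(u)$ is still independent. We must also make sure that the chosen $w$ lies in $F$ and is adjacent to $u$ in $G$. For this we take the end of $G'-u$ that avoids the clique $S_F$. Such an end exists because a clique cannot be split by a cutset, so some end of $G'-u$ lies entirely in $F\setminus\{u\}$, and $u$ has a neighbour in it by the argument above. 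The resulting $w$ then satisfies $\kappa(G\langle S_F\rangle-(\bar F\cup\{u,w\}))\ge k$.
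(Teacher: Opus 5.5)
Your proposal leaves the decisive step open, namely showing that the chosen $w$ lies in no $k$-cutset of $H=G-u$. More seriously, the rule you use to choose $w$ is false, so no completion of the crossing argument can rescue it.

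Here is a counterexample to the rule. Take two disjoint copies of $K_{3,3}$ with bipartitions $\{a,a_2,a_3\}\cup\{c_1,c_2,c_3\}$ and $\{b,b_2,b_3\}\cup\{d_1,d_2,d_3\}$. Add vertices $x,u,y$ with $N(x)=\{u,a,b\}$, $N(u)=\{x,a_2,b_2\}$ and $N(y)=\{c_1,c_2,d_1,d_2\}$. This graph is triangle-free and $3$-connected; let $k=2$.
\begin{itemize}
\item In $H=G-u$ the only vertex of degree $2$ is $x$. So $\{x\}$, to the cutset $\{a,b\}$, is the unique smallest end, and your rule forces $w=x$.
\item Since $N_G(u)\cap\{x\}=\{x\}$, there is no other neighbour to iterate over.
\item Yet $\{x,y\}$ separates the two copies of $K_{3,3}$ in $H$, so $\kappa(G-\{u,x\})=1<k$. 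The valid choices here are $a_2$ and $b_2$.
\end{itemize}
Your last suggested route also fails in general. A vertex $w$ of a minimum cutset $T'$ of $H$ has a neighbour in every component of $H-T'$, so $G-((T'\setminus\{w\})\cup\{u\})$ is always connected.

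The missing idea is to minimise not over all ends of $H$, but over a restricted family. Consider fragments $F^*$ of $k$-cutsets $S^*$ of $H'=H-u$ such that $S^*$ contains a neighbour of $u$, and, in case (1), $F^*\cap S_F=\emptyset$. These are the paper's ``good pairs''. For them, triangle-freeness gives a size bound:
\begin{itemize}
\item $F^*$ must contain some $w\in N(u)$; otherwise $S^*$ would be a $k$-cutset of $H$.
\item $w$ has at most $k-1$ neighbours in $S^*$, because $S^*$ contains another neighbour of $u$, which is not adjacent to $w$.
\item Hence $w$ has a neighbour $w'\in F^*$. This $w'$ is not adjacent to $u$, so $d_{H'}(w')\ge k+1$.
\item Since $N(w)\cap N(w')=\emptyset$, we get $|F^*|\ge k+(k+1)-k=k+1$.
\end{itemize}

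Now take a good pair $(S_1,F_1)$ with $|F_1|$ minimum, some $w\in N(u)\cap F_1$, and a $k$-cutset $S_2\ni w$ of $H'$ with fragment $F_2$. In the crossing argument, the corner $F_1\cap F_2$ is again good whenever its boundary has size $k$, because that boundary contains $w$. So minimality applies to it. The nine-set analysis, again using the size bound, then forces $F_1\subseteq S_2$. This contradicts $|F_1|\ge k+1>k=|S_2|$.

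Note that this argument uses triangle-freeness on the edge $ww'$ inside $F$, not only at $u$. That use remains available in case (1).
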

\begin{proof}
We use one argument for both assertions. For $(1)$, let $H=G\left \langle S_F \right \rangle-\bar{F}$ and let $S=S_F$. For $(2)$, let $H=G$, $F=V(G)$ and $S=\emptyset$. In either case, $V(H)=F\cup S$ and $\kappa(H)\geq k+1$. For $(1)$, the connectivity bound follows from Lemma \ref{mader[S]}. Thus, $\delta(H)\geq k+1$. Moreover, every edge $e\in E(H)$ with $V(e)\cap F\neq \emptyset$ is an edge of $G$. Since $G$ is triangle-free, $H[R]=G[R]$ is also triangle-free for any $R\subseteq F$.

Suppose, to the contrary, that there exists $u\in F$ such that $\kappa(H-\{u,w\})<k$ for every $w\in W$, where $W=N_{G[F]}(u)$. Since $H[F]$ is triangle-free, $W$ is an independent set. Let $H'=H-\{u\}$. The above assumption and $\kappa(H)\geq k+1$ imply that $\kappa(H)=k+1$, $\kappa(H')=k$ and there exists a $k$-cutset of $H'$ containing $w$ for every $w\in W$.


We say a pair $(S^*,F^*)$ is {\it good} if $S^*$ is a $k$-cutset of $H'$ with $S^*\cap W\neq \emptyset$ and $F^*$ is a fragment of $H'$ to $S^*$ with $F^*\cap S=\emptyset$. Since $H'[S]$ is complete, such a good pair exists.
\begin{claim}\label{Fsize}
If $(S^*,F^*)$ is good, then $F^*\cap W\neq \emptyset$ and $|F^*|\geq k+1$.
\end{claim}
\begin{proof}
Since $\kappa(H)\geq k+1$, $F^*\cap W\neq \emptyset$ (i.e., $F^*\cap N_G(u)\neq \emptyset$). Otherwise $S^*$ is a $k$-cutset of $H$, a contradiction. Choose $w\in F^*\cap W$. Since $S^*\cap W\neq \emptyset$ and $W$ is an independent set in $H'$, $|N_{H'}(w)\cap S^*|\leq k-1$. Note that $d_{H'}(w)= d_H(w)-1\geq k$. Then $|N_{H'}(w)\cap F^*|= d_{H'}(w)-|N_{H'}(w)\cap S^*|\geq 1$. Choose $w'\in N_{H'}(w)\cap F^*$. Since $u,w,w'\in F$ and $H[F]$ is triangle-free, $w'u\notin E(H)$ and thus $d_{H'}(w')=d_{H}(w')\geq k+1$. Moreover, since $N_{H'}(w)\cap N_{H'}(w')=\emptyset$ and $N_{H'}(w)\cup N_{H'}(w')\subseteq F^*\cup S^*$, $|F^*|\geq d_{H'}(w)+d_{H'}(w')-|S^*|\geq 2k+1-k\geq k+1$.
\end{proof}
Choose a good pair $(S_1,F_1)$ with $|F_1|$ as small as possible. Let $\bar{F_1}=H'-(S_1\cup F_1)$. By Claim \ref{Fsize}, $F_1\cap W\neq \emptyset$. Choose $w\in N_H(u)\cap F_1$. Let $S_2$ be a $k$-cutset of $H'$ containing $w$, let $F_2$ be a fragment of $H'$ to $S_2$ and $\bar{F_2}=H'-(S_2\cup F_2)$. We use the following notation for the nine intersections:
\begin{align*}
A_1=F_1\cap F_2, A_2=S_1\cap F_2,A_3=\bar F_1\cap F_2,\\
A_4=F_1\cap S_2,A_5=S_1\cap S_2,A_6=\bar F_1\cap S_2,\\
A_7=F_1\cap \bar{F_2},A_8=S_1\cap \bar{F_2},A_9=\bar F_1\cap \bar{F_2}.
\end{align*}
Then $w\in A_4$. When $S\neq \emptyset$, since $H'[S]$ is complete and $F_1\cap S=\emptyset$, without loss of generality, we may assume that $S\subseteq A_5\cup A_6\cup A_8\cup A_9$.
\begin{claim}\label{A1A7}
If $A_1\neq \emptyset$, then $|A_2\cup A_4\cup A_5|\geq k+1$ and $A_9=\emptyset$. Similarly, if $A_7\neq\emptyset$, then $|A_4\cup A_5\cup A_8|\geq k+1$ and $A_3=\emptyset$.
\end{claim}
\begin{proof}
Suppose that $A_1\neq \emptyset$. Since $\bar{F_1}\neq \emptyset$, $A_2\cup A_4\cup A_5$ is a cut set of $H'$. If $|A_2\cup A_4\cup A_5|<k+1$, since $\kappa(H')=k$, $|A_2\cup A_4\cup A_5|=k$. Now $A_1$ is a fragment of $H'$ to $A_2\cup A_4\cup A_5$ with $A_1\cap S=\emptyset$ and $w\in A_4\subseteq A_2\cup A_4\cup A_5$. Thus, $(A_2\cup A_4\cup A_5,A_1)$ is a good pair with $|A_1|<|F_1|$, contradicting the choice of $(S_1,F_1)$. So $|A_2\cup A_4\cup A_5|\geq k+1$, and consequently $|A_5\cup A_6\cup A_8|=|S_1|+|S_2|-|A_2\cup A_4\cup A_5|<k$. Thus, $A_9=\emptyset$. Otherwise $A_5\cup A_6\cup A_8$ is a cut set of $H'$ of size at most $k-1$, a contradiction. The second assertion follows by interchanging $F_2$ and $\bar F_2$.
\end{proof}
We first claim that $A_7=\emptyset$. If not, by Claim \ref{A1A7}, $|A_4\cup A_5\cup A_8|\geq k+1$ and $A_3=\emptyset$. Since $|A_2\cup A_5\cup A_8|=|S_1|=k$, $|A_4|>|A_2|$. Recall that $F_2\cap S=\emptyset$. Thus, $(S_2,F_2)$ is a good pair with $|F_2|=|A_1\cup A_2\cup A_3|<|A_1\cup A_4\cup A_7|=|F_1|$, contradicting the choice of $(S_1,F_1)$.

We next show that $A_1=\emptyset$. If not, Claim \ref{A1A7} implies $|A_2\cup A_4\cup A_5|\geq k+1$ and $A_9=\emptyset$. If $S\neq \emptyset$, then $|A_5\cup A_6\cup A_8|\geq k$ since $S\subseteq A_5\cup A_6\cup A_8$. Thus, $2k=|S_1|+|S_2|=|A_2\cup A_4\cup A_5|+|A_5\cup A_6\cup A_8|>2k$, a contradiction. If $S=\emptyset$, then $F_2\cap S=\emptyset$ and thus $(S_2,\bar{F_2})$ is a good pair. By Claim \ref{Fsize}, $|A_8|=|\bar{F_2}|\geq k+1$. Moreover, since $A_8\subseteq S_1$, we have $k+1\leq|A_8|\leq |S_1|=k$, a contradiction.

So we have $A_1,A_7=\emptyset$. Thus, $|A_4|=|F_1|\geq k+1$ by Claim \ref{Fsize}. Moreover, since $A_4\subseteq S_2$, we have $k+1\leq|A_4|\leq |S_2|=k$, a contradiction. This completes the proof of Lemma \ref{m=2}.
\end{proof}
\begin{corollary}\label{trim=2}
Let $k$ be a positive integer, let $G$ be a triangle-free graph with $\delta(G)\geq k+1$. The following statements hold.
\begin{enumerate}[label=(\arabic*)]
\item If $\kappa(G)=k$ and $F$ is an end of $G$ to $S_F$, then for every vertex $u\in F$ there exists $w\in N_G(u)\cap F$ such that $\kappa(G-\{u,w\})\geq k$.
\item If $\kappa(G)\geq k+1$, then for every vertex $u\in V(G)$ there exists $w\in N_G(u)$ such that $\kappa(G-\{u,w\})\geq k$.
\end{enumerate}
\end{corollary}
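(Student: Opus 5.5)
Part (2) is exactly Lemma \ref{m=2}(2), since that lemma assumes nothing about the minimum degree. So the work is all in (1). The plan there is to take the $w$ given by Lemma \ref{m=2}(1) and transfer the connectivity of the completed graph back to $G$ with Lemma \ref{com}, applied with $m=2$ and $P=uw$. The hypothesis $\delta(G)\ge k+1=k+\lceil 2/2\rceil$ is precisely the degree hypothesis of Lemma \ref{com} (and of Lemma \ref{Fk+m}) for $m=2$.

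\textbf{Step 1.} I first check $|F|\ge 2$, which Lemma \ref{m=2}(1) needs. Since $\kappa(G)=k$ and $F$ is a fragment, Lemma \ref{Fk+m} with $m=2$ gives $|F|\ge k+2\ge 2$. More directly: a vertex of $F$ has all of its $\ge k+1$ neighbours in $F\cup S_F$ with $|S_F|=k$, so it has a neighbour in $F$.

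\textbf{Step 2.} Fix $u\in F$. Lemma \ref{m=2}(1) yields $w\in N_G(u)\cap F$ with
\[
\kappa\bigl(G\left\langle S_F\right\rangle-(\bar F\cup\{u,w\})\bigr)\ge k .
\]
Let $P=uw$. This is a path in $G[F]$ of order $2\le m$ with $m=2$.

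\textbf{Step 3.} Lemma \ref{com}, applied with $m=2$, $S=S_F$ and the fragment $F$, then gives $\kappa(G-\{u,w\})\ge k$. I need to confirm that its hypotheses hold. They are: $G$ is triangle-free, $\kappa(G)=k$, $\delta(G)\ge k+1$, $F$ is a fragment to $S_F$ (an end is in particular a fragment), and $P\subseteq G[F]$ satisfies the completed-graph connectivity bound from Step 2. Inside its proof, Lemma \ref{com} also uses $|\bar F|\ge k+m$. This holds because $\bar F$ contains a fragment (for instance, the union of the components of $G-S_F$ other than those in $F$, each of which has neighbourhood exactly $S_F$ by minimality of $S_F$), so the argument of Lemma \ref{Fk+m} applies to it.

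The main point needing care is Step 3, namely matching the hypotheses of Lemma \ref{com} exactly. In particular I have to make sure that $\bar F$ is large, and that the degree loss on removing $P$ is at most $\lceil m/2\rceil=1$, which holds because $G$ is triangle-free and so no vertex is adjacent to both $u$ and $w$. Everything else is a direct citation.
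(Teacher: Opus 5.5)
Your proposal is correct and follows the same route as the paper. You check $|F|\ge 2$ from the degree condition, take $w$ from Lemma \ref{m=2}(1), and transfer the connectivity back to $G$ by Lemma \ref{com} with $m=2$; part (2) is cited directly from Lemma \ref{m=2}(2). Your extra check that $|\bar F|\ge k+m$ is correct but already covered inside the proof of Lemma \ref{com}.
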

\begin{proof}
For $(1)$, since $|F|\geq \delta(G[F])+1\geq \delta(G)-|S_F|+1\geq 2$, by Lemma \ref{m=2} $(1)$, for every $u\in F$ there exists $w\in N_G(u)\cap F$ such that $\kappa(G\left \langle S_F \right \rangle-(\bar{F}\cup \{u,w\}))\geq k$. Since $\delta(G)\geq k+1=k+\frac{2}{2}$, applying Lemma \ref{com} to $uw$, we obtain that $\kappa(G-\{u,w\})\geq k$ and we are done. $(2)$ follows directly from Lemma \ref{m=2} $(2)$.
\end{proof}
\begin{lemma}\label{single}
Let $k$ be a positive integer, let $G$ be a triangle-free graph with $\kappa(G)=k$ and $\delta(G)\geq k+1$, and let $F$ be an end of $G$ to $S$. For every $u\in F$, $\kappa(G-\{u\})=k$ and there exists an end $F'\subseteq F$ of $G-\{u\}$ such that $N_G(u)\cap F'\neq \emptyset$.
\end{lemma}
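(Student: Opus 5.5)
The plan is to reduce to Lemma~\ref{subend} with $W=\{u\}$. The only real work is to show $\kappa(G-\{u\})\geq k$; once this holds, $\{u\}\subsetneq F$ (since $|F|\geq k+2\geq 2$ by Lemma~\ref{Fk+m} with $m=1$, or simply because $F$ contains an edge), and Lemma~\ref{subend} gives both $\kappa(G-\{u\})=k$ and the end $F'\subseteq F$ of $G-\{u\}$ with $N_G(u)\cap F'\neq\emptyset$.

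To show $\kappa(G-\{u\})\geq k$, the cheapest route is Corollary~\ref{trim=2}(1). Since $\delta(G)\geq k+1$ and $\kappa(G)=k$, it provides $w\in N_G(u)\cap F$ with $\kappa(G-\{u,w\})\geq k$. Now $G-\{u\}$ is obtained from $G-\{u,w\}$ by adding back the single vertex $w$. We have $d_{G-\{u\}}(w)=d_G(w)-1\geq k$, so $w$ has at least $k$ neighbours in the $k$-connected graph $G-\{u,w\}$. Adding a vertex with at least $k$ neighbours to a $k$-connected graph preserves $k$-connectivity. This is the same argument as the first paragraph of the proof of Lemma~\ref{lem:add-vertex-outside-end}: for $|X|\leq k-1$, the graph $G-\{u,w\}-X$ is connected and $w$, if not in $X$, has a neighbour outside $X$. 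Hence $\kappa(G-\{u\})\geq k$.

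An alternative direct argument avoids the corollary. Suppose $X$ is a cutset of $G-u$ with $|X|\leq k-1$. Then $X\cup\{u\}$ is a $k$-cutset of $G$, and one would argue from the triangle-free degree count, as in the last paragraph of Lemma~\ref{com}, that this is impossible. I expect the corollary route to be cleaner, so I would use it.

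The main obstacle is only checking the hypotheses: that Corollary~\ref{trim=2}(1) applies (it needs only $\delta(G)\geq k+1$ and an end $F$), and that $\{u\}$ is a proper subset of $F$ for Lemma~\ref{subend}. The latter follows from $\delta(G[F])\geq \delta(G)-k\geq 1$.
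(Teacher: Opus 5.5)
Your proof is correct, but it gets $\kappa(G-\{u\})\geq k$ by a different and heavier route than the paper.

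The paper argues directly. Since $|F|\geq 2$, Lemma~\ref{mader[S]} gives $\kappa(G\langle S\rangle-\bar F)\geq k+1$, hence $\kappa(G\langle S\rangle-(\bar F\cup\{u\}))\geq k$. Lemma~\ref{com}, applied to the one-vertex path $P=u$ with $m=1$ (so only $\delta(G)\geq k+1$ is needed), then gives $\kappa(G-\{u\})\geq k$. The final appeal to Lemma~\ref{subend} is the same as yours.

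You instead take the removable edge $uw$ from Corollary~\ref{trim=2}(1) and put $w$ back using its $d_G(w)-1\geq k$ neighbours in $G-\{u,w\}$. This is not circular, since the corollary rests only on Lemmas~\ref{m=2} and~\ref{com}, and the restoration step is sound. However, you derive removability of a vertex from the stronger removability of an edge. As a result the lemma inherits a dependence on the long fragment-intersection argument of Lemma~\ref{m=2}, which the paper's route avoids. In exchange, you do not have to check the completion hypothesis of Lemma~\ref{com} yourself.

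Your sketched ``alternative direct argument'' is essentially the paper's route, but as written it lacks its key step. The triangle-free degree count only works after Lemma~\ref{common} has forced every component of $G-(\{u\}\cup X)$ other than the big one into $S$. That application of Lemma~\ref{common} relies on Lemma~\ref{mader[S]} and on $F$ being an end.

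A harmless slip: Lemma~\ref{Fk+m} with $m=1$ gives $|F|\geq k+1$, not $k+2$. Either bound suffices for $\{u\}\subsetneq F$.
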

\begin{proof}
Since $|F|\geq \delta(G[F])+1\geq (\delta(G)-|S|)+1\geq 2$, $\kappa(G\left \langle S \right \rangle-\bar{F})\geq k+1$ by Lemma \ref{mader[S]} and thus $\kappa(G\left \langle S \right \rangle-(\bar{F}\cup \{u\}))\geq k$. By Lemma \ref{com}, we obtain $\kappa(G-\{u\})\geq k$. Moreover, since $F\setminus \{u\}\neq \emptyset$, by Lemma \ref{subend}, $\kappa(G-\{u\})=k$ and there exists an end $F'\subseteq F$ of $G-\{u\}$ such that $N_G(u)\cap F'\neq \emptyset$. This completes the proof of Lemma \ref{single}.
\end{proof}
\begin{proof}[Proof of Theorem \ref{tripath}]
We prove the assertion for all positive integers $k$ by induction on $m$. For $m=1$, the assertion follows from Lemma \ref{single} when $\kappa(G)=k$, and $\kappa(G-\{u\})\geq k$ for any $u\in V(G)$ when $\kappa(G)\geq k+1$. For $m=2$, the assertion follows directly from Corollary \ref{trim=2}. Let $m\geq 3$ and assume that the assertion holds for all positive integers $k$ and all smaller values of $m$.

We first consider the case $\kappa(G)=k$. Let $F$ be an arbitrary end of $G$ and $u$ be an arbitrary vertex in $F$. Since $\delta(G)\geq k+\left \lceil \frac{m}{2} \right \rceil\geq k+2$, applying Lemma \ref{single} to $u$, we have that $\kappa(G-\{u\})=k$ and there exists an end $F_1\subseteq F$ of $G-\{u\}$ such that $N_G(u)\cap F_1\neq \emptyset$. Choose $v\in N_G(u)\cap F_1$. Since $\delta(G-\{u\})\geq \delta(G)-1\geq k+1$, applying Lemma \ref{single} again to $v$, we have $\kappa(G-\{u,v\})=k$ and there exists an end $F_2\subseteq F_1$ of $G-\{u,v\}$ such that $N_G(v)\cap F_2\neq \emptyset$. Choose $w\in N_G(v)\cap F_2$. As $uv\in E(G)$ and $G$ is triangle-free, no vertex is adjacent to both $u,v$. Thus, $\delta(G-\{u,v\})\geq k+\left \lceil \frac{m}{2} \right \rceil-1= k+\left \lceil \frac{m-2}{2} \right \rceil$. By the induction hypothesis, $F_2$ contains a path $P_w$ of order $m-2$ starting at $w$ such that $\kappa(G-(\{u,v\}\cup V(P_w)))\geq k$. The path $uvP_w$ is the desired path.

We next consider the case $\kappa(G)\geq k+1$. Fix an arbitrary vertex $v_1\in V(G)$, we successively extend a path $R_i=v_1\cdots v_i$ starting at $v_1$, stopping as soon as $i=m-2$ or $\kappa(G_i)=k$, where $G_0=G$ and $G_j=G-V(R_j)$ for every $j\in [i]$. This construction is possible. For every $j\in [m-3]$, since $G$ is triangle-free, $|N_G(v_j)\setminus V(R_j)|\geq \delta(G)-\left \lceil \frac{j-1}{2} \right \rceil>0$. Thus, whenever $\kappa(G_j)\geq k+1$, we can choose a neighbor $v_{j+1}$ of $v_j$ in $G-V(R_j)$ and then $\kappa(G_{j+1})\geq k$. By construction, $\kappa(G_j)\geq k+1$ for every $j<i$.

If $i=m-2$, then $R_i=v_1\cdots v_{m-2}$. Since $G$ is triangle-free, $|N_G(v_i)\cap V(G_i)|\geq \delta(G)-\left \lceil \frac{m-2}{2} \right \rceil\geq k+1$ and $\delta(G_i)\geq k+1$. If $\kappa(G_i)=k$, let $F_i$ be an end of $G_i$. Since $\kappa(G_{i-1})\geq k+1$, $N_G(v_i)\cap F_i\neq \emptyset$. Otherwise $N_{G_i}(F_i)$ is a $k$-cutset of $G_{i-1}$, contradicting $\kappa(G_{i-1})\geq k+1$. If $\kappa(G_i)\geq k+1$, let $F_i=V(G_i)$. In either case, choose $v_{m-1}\in N_G(v_i)\cap F_i$. By Corollary \ref{trim=2}, there exists $v_m\in N_{G_i}(v_{m-1})\cap F_i$ such that $\kappa(G_i-\{v_{m-1},v_m\})\geq k$. Hence, $v_1v_2\cdots v_m$ is the desired path.

We may assume that $i<m-2$. By the stopping rule, $\kappa(G_i)=k$. Since $\kappa(G_{i-1})\geq k+1$, $N_G(v_i)\cap F\neq \emptyset$, where $F$ is an end of $G_i$. Since $G$ is triangle-free, each vertex of $G$ has at most $\left \lceil \frac{i}{2} \right \rceil$ neighbors on $R_i$. Thus, $v_i$ has at least $k+\left \lfloor \frac{m-i}{2} \right \rfloor\geq k+1$ neighbors in $G_i$ and $\delta(G_i)\geq k+\left \lfloor \frac{m-i}{2} \right \rfloor$. Choose $v_{i+1}\in N_G(v_{i})\cap F$. If $m-i$ is even, the induction hypothesis implies that there exists a path $P\subseteq G[F]$ starting at $v_{i+1}$ of order $m-i$ such that $\kappa(G_{i}-V(P))\geq k$. Thus, the path $R_iP$ is the desired path. If $m-i\geq 3$ is odd, by Lemma \ref{single}, $G_{i+1}=G_i-\{v_{i+1}\}$ has connectivity $k$ and there exists an end $F'$ of $G_{i+1}$ with $N_G(v_{i+1})\cap F'\neq \emptyset$. Choose $v_{i+2}\in N_G(v_{i+1})\cap F'$. Since $\delta(G_{i+1})\geq k+\left \lfloor \frac{m-(i+1)}{2} \right \rfloor$ and $m-(i+1)$ is even, the induction hypothesis implies that there exists a path $P'$ starting at $v_{i+2}$ of order $m-(i+1)$ such that $\kappa(G_{i+1}-V(P'))\geq k$. Thus, the path $R_iv_{i+1}P'$ is the desired path.
\end{proof}
\section{Proof of Theorem \ref{triangle}}\label{trifulll}
We first consider $P_6^+(3)$, the tree whose three legs have orders $2,2,1$.
\begin{proposition}\label{P5+3}
Let $k$ be a positive integer, let $G$ be a triangle-free graph with $\kappa(G)\geq k$ and $\delta(G)\geq k+3$. Then $G$ contains a subgraph $T\cong P_6^+(3)$ such that $\kappa(G-V(T))\geq k$. Moreover, if $\kappa(G)=k$, then $T$ can be chosen such that $T\subseteq G[F]$, where $F$ is an arbitrary end of $G$.
\end{proposition}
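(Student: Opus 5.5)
The approach is to build $T$ greedily, one or two vertices at a time, as in the proof of Theorem~\ref{tripath}. Write $T\cong P_6^+(3)$ as the path $v_1v_2v_3v_4v_5$ plus a pendant vertex $u$ adjacent to $v_3$, so its bipartition classes are $\{v_1,v_3,v_5\}$ and $\{v_2,v_4,u\}$ and $\max(T)=3$. The difficulty is that $v_3$ has degree $3$ in $T$ and must keep a usable neighbor after two of its branches have been removed. I would grow $T$ as three pieces:
\begin{enumerate}
\item the leaf $u$;
\item the branch $v_3v_4v_5$, grown from $v_3\in N_G(u)$;
\item the branch $v_2v_1$, grown from $v_2\in N_G(v_3)$.
\end{enumerate}

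First suppose $\kappa(G)=k$, and fix an end $F$ and any $u\in F$. By Lemma~\ref{single} (since $\delta(G)\ge k+3$), $G_1=G-u$ has $\kappa(G_1)=k$ and an end $F_1\subseteq F$ containing some $v_3\in N_G(u)$. Since $\delta(G_1)\ge k+2=k+\lceil 3/2\rceil$, Theorem~\ref{tripath}(1) applied in $G_1$ gives a path $v_3v_4v_5$ in $G[F_1]$ with $\kappa(G_2)\ge k$, where $G_2=G-\{u,v_3,v_4,v_5\}$. By Lemma~\ref{vT^*} (the subgraph on $u,v_3,v_4,v_5$ is $P_4^+(1)$ with $\max=2$), $\delta(G_2)\ge k+1$. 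It then suffices to find $v_2\in N_G(v_3)\cap V(G_2)$ lying in an end $F_2\subseteq F_1$ of $G_2$ (or anywhere if $\kappa(G_2)\ge k+1$). Corollary~\ref{trim=2} then supplies $v_1\in N(v_2)\cap F_2$ with $\kappa(G_2-\{v_2,v_1\})\ge k$. Triangle-freeness gives $v_1\notin N(v_3)$ automatically, and $T\subseteq G[F]$ because all choices stay inside nested ends.

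The key step, and the expected main obstacle, is guaranteeing that $v_3$ has a neighbor inside an end of $G_2$. I would try first to compare with $G_2'=G-\{u,v_4,v_5\}=G_2+v_3$:
\begin{itemize}
\item If $\kappa(G_2')\ge k+1$, then every end of $G_2$ must meet $N(v_3)$, since otherwise its neighborhood would be a $k$-cut of $G_2'$.
\item If $\kappa(G_2')=k$, I would instead reorganize the construction as $u$, then $v_3$, then $v_4$, each removed by Lemma~\ref{single} so that each new vertex lies in a nested end containing a neighbor of the previous one, in the spirit of the case $\kappa(G)=k$ of Theorem~\ref{tripath}.
\end{itemize}
In that reorganization, the degree of $v_3$ (at least $k+1$ after deleting $u$ and $v_4$, and not adjacent to $v_5$) should force a second neighbor of $v_3$ into the current end, via Lemma~\ref{subend} applied with $W=\{v_4,v_5\}$ and Lemma~\ref{com} to transfer connectivity from the completed graph $G\langle S\rangle-\bar F$. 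If the end found by Lemma~\ref{subend} contains only neighbors of $v_4$ or $v_5$, I would swap the roles of the legs: grow $v_2v_1$ from $v_3$ first and then $v_4v_5$, or use the symmetry $v_4\leftrightarrow u$ (both legs of order at least $1$ at $v_3$) to choose which leg is attached last.

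For $\kappa(G)\ge k+1$, start with any $u$. If $\kappa(G-u)\ge k+1$, continue with the same greedy steps, using Corollary~\ref{trim=2}(2) and Theorem~\ref{tripath}(2). As soon as some intermediate graph drops to connectivity $k$, the previous graph had connectivity at least $k+1$, so the last-added vertex has a neighbor in an end of the current graph. From there I would fall back to the $\kappa=k$ argument above, exactly as in the proof of Theorem~\ref{tripath}. Throughout, the degree bookkeeping via Lemma~\ref{vT^*} ensures that each intermediate graph has minimum degree at least $k+1$ or $k+2$, as needed.
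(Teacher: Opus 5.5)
\textbf{There is a genuine gap at the center vertex $v_3$ when $\kappa(G)=k$.} Your subcase $\kappa(G-\{u,v_4,v_5\})\ge k+1$ is fine. The complementary subcase, however, is the whole content of the proposition, and your sketch does not handle it.

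Lemma~\ref{subend}, applied to a removed set $W$, only gives an end of the smaller graph that meets $N_G(W)$. With $W=\{v_4,v_5\}$ this is a neighbor of $v_4$ or $v_5$, never of $v_3$. Even with $W=\{v_3,v_4,v_5\}$ it need not be a neighbor of $v_3$; in the bad case it is a neighbor of the leaf you added last.

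The degree of $v_3$ does not help either. Having at least $k+1$ neighbors in $G_2$ only shows that $v_3$ has a neighbor off a $k$-cut, possibly on its far side, not inside the end you need. Lemma~\ref{com} transfers connectivity but says nothing about which vertex sees the end. So ``should force a second neighbor of $v_3$ into the current end'' is exactly the unproved claim.

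Swapping the order of the legs, or exchanging the roles of $u$ and $v_4$, only changes which leaf the new end is attached to. Nothing decreases from one attempt to the next, so the configuration ``the end sees $T$ only through a leaf'' can recur for every ordering, and the procedure has no reason to terminate.

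\textbf{The missing idea is an extremal descent argument.} Assuming $G[F]$ contains no good copy of $T$, the paper considers triples $(uv,x_1x_2x_3x_4,F')$. Each consists of an edge and a disjoint $4$-path in $F$ whose deletion leaves connectivity exactly $k$, together with a nontrivial end $F'\subseteq F$ of the remainder.
\begin{enumerate}
\item It first shows that the internal vertices $x_2,x_3$ never see $F'$. Otherwise Lemma~\ref{m=2}(1) gives an edge $ab$ in $F'$ with $ax_2\in E(G)$, and the tree with center $x_2$ and legs $x_1$, $x_3x_4$, $ab$ is available. Lemma~\ref{common} plus a triangle-free degree count then shows that deleting this tree keeps $k$-connectivity. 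This turns your obstruction (the end sees only a leaf) into an invariant.
\item It then takes such a triple in which exactly one of $x_1,x_4$ sees the end, with the end of minimum size, say $(ab,cdef,F^*)$ with $f$ seeing $F^*$.
\item It restores $c,d$ via Lemma~\ref{lem:add-vertex-outside-end} and slides the path forward to $efgh$ with $g\in N_G(f)\cap F^*$. This yields a triple of the same kind with a strictly smaller end, a contradiction.
\end{enumerate}

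\textbf{Your $\kappa(G)\ge k+1$ case has a related defect.} If connectivity drops to $k$ right after placing $u$ and $v_3$, you still need two legs at $v_3$. That is not an instance of the $\kappa=k$ statement and runs into the same obstacle. Here the fix is easy. Since $\delta(G)\ge (k+1)+\lceil 4/2\rceil$, Theorem~\ref{tripath} with $k+1$ in place of $k$ gives a path $v_1v_2v_3v_4$ whose deletion leaves a $(k+1)$-connected graph. A two-vertex leg can then be hung on $v_2$ by Lemma~\ref{m=2}(2).
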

\begin{proof}
If $\kappa(G)\geq k+1$, since $\delta(G)\geq k+3=(k+1)+\frac{4}{2}$, by Theorem \ref{tripath}, $G$ contains a path $v_1v_2v_3v_4$ such that $I=G-\{v_1,v_2,v_3,v_4\}$ is $(k+1)$-connected. Since $G$ is triangle-free and $v_1v_2v_3v_4$ is a path, $v_2$ has at least $\delta(G)-2\geq k+1$ neighbors in $I$. Choose $v_5\in N_G(v_2)\cap V(I)$. By Lemma \ref{m=2} $(2)$, there exists $v_6\in N_I(v_5)$ such that $I-\{v_5,v_6\}=G-\{v_1,v_2,v_3,v_4,v_5,v_6\}$ is $k$-connected. The tree with center $v_2$ and three legs $v_1$, $v_3v_4$, $v_5v_6$ is the desired tree.

If $\kappa(G)=k$, fix an end $F$ of $G$ and suppose that $G[F]$ contains no subgraph $T\cong P_6^+(3)$ such that $\kappa(G-V(T))\geq k$. Consider distinct vertices $u,v,x_1,x_2,x_3,x_4\in F$ such that $uv$ is an edge and $x_1x_2x_3x_4$ is a path. Let $U=\{u,v,x_1,x_2,x_3,x_4\}$. We call $(uv,x_1x_2x_3x_4,F')$ a {\it triple} if $\kappa(G-U)=k$ and $F'\subseteq F\setminus U$ is a nontrivial (i.e., $|F'|\geq 2$) end of $G-U$. It is {\it good} if $|N_G(F')\cap \{x_1,x_4\}|=1$.
\begin{claim}\label{uvdegree}
For every triple $(uv,x_1x_2x_3x_4,F')$, each of $u$ and $v$ has at most one neighbor in $\{x_1,x_2,x_3,x_4\}$.
\end{claim}
\begin{proof}
If $ux_2\in E(G)$, then the tree with center $x_2$ and three legs $x_1,uv,x_3x_4$ is the desired tree, a contradiction. By symmetry of $u$ and $v$,
$x_2$ and $x_3$, we obtain that $E(G[\{u,v\},\{x_2,x_3\}])=\emptyset$. If $\{x_1,x_4\}\subseteq N_G(u)$, then the tree with center $u$ and three legs
$v,x_1x_2,x_3x_4$ is the desired tree, a contradiction. By symmetry, $|N_G(v)\cap\{x_1,x_4\}|\leq 1$. Thus, each of $u$ and $v$ has at most one neighbor on $x_1x_2x_3x_4$.
\end{proof}
\begin{claim}\label{x2x3emp}
For every triple $(uv,x_1x_2x_3x_4,F')$, $N_G(F')\cap \{x_2,x_3\}=\emptyset$.
\end{claim}
\begin{proof}
Let $H=G-U$, let $S=N_H(F')$ and let $\bar{F'}=H-(S\cup F')$. By symmetry of $x_2$ and $x_3$, it suffices to show that $x_2\notin N_G(F')$. Suppose $x_2\in N_G(F')$ and choose $a\in N_G(x_2)\cap F'$. Since $|F'|\geq 2$, by Lemma \ref{m=2} $(1)$, there exists $b\in N_H(a)\cap F'$ such that $\kappa(H\left \langle S \right \rangle -(\{a,b\}\cup \bar{F'}))\geq k$.

Let $H_0=H-\{a,b\}$, $F_0=F'-\{a,b\}$ and $\bar{F_0}=\bar{F'}$.
Let $Q=G[V(H_0)\cup \{u,v\}]$. We assert that $\kappa(Q)<k$. Otherwise, the tree with center $x_2$ and three legs $x_1,x_3x_4,ab$ is the desired tree, a contradiction. Let $S'$ be a cut set of $Q$ with $|S'|<k$ and let $S''=S'\setminus \{u,v\}$.


We next verify the conditions of Lemma \ref{common} for $H_0$ and $S$. By Lemma \ref{mader[S]}, we obtain that $\kappa(H\left \langle S \right \rangle -\bar{F'})\geq k+1$ and $\kappa(H\left \langle S \right \rangle -F')\geq k$. Completing $S$ does not change the degrees of vertices in $F'$, so $d_H(a),d_H(b)\geq k+1$. As $N_H(a)\cap N_H(b)=\emptyset$ and $N_H(a)\cup N_H(b)\subseteq F'\cup S$, we obtain $|F'|\geq d_H(a)+d_H(b)-|S|\geq k+2$. Thus, $|F_0|=|F'|-2\geq k$. Since $F'\subseteq F$ and there are no edges between $F$ and $\bar{F}$, we have $\bar{F}\subseteq \bar{F_0}$. Thus, Lemma \ref{Fk+m} implies $|\bar{F_0}|\geq |\bar{F}|\geq k+6$. Moreover, $\kappa(H_0\left \langle S \right \rangle-\bar{F_0})=\kappa(H\left \langle S \right \rangle -(\{a,b\}\cup \bar{F'}))\geq k$ and $\kappa(H_0\left \langle S \right \rangle-F_0)=\kappa(H\left \langle S \right \rangle -F')\geq k$. Therefore, Lemma \ref{common} yields a component $C$ of $H_0-S''$ containing $(F_0\cup \bar{F_0})\setminus S''$.

Let $D$ be a component of $Q-S'$ not containing $C$.
Then $V(D)\subseteq S\cup\{u,v\}$. Since $x_1x_2x_3x_4$ is a path and $ab$ is an edge, $\delta(Q)\geq\delta(G)-3\geq k$.
Furthermore, Claim \ref{uvdegree} implies that each of $u$ and $v$ has at most one neighbor on $x_1x_2x_3x_4$. Thus $d_Q(u),d_Q(v)\geq\delta(G)-2\geq k+1$.

Let $r=|V(D)\cap\{u,v\}|$. Since $\delta(Q[V(D)])\geq\delta(Q)-|S'|\geq1$, we can choose an edge $yz\in E(Q[V(D)])$ such that $V(D)\cap\{u,v\}\subseteq \{y,z\}$. As $Q$ is triangle-free, $N_Q(y)$ and $N_Q(z)$ are disjoint, and both are contained in $V(D)\cup S'$.
Therefore,
\[
2k+r\leq d_Q(y)+d_Q(z)\leq |V(D)|+|S'|\leq k+r+(k-1)=2k+r-1,
\]
a contradiction. This completes the proof of Claim \ref{x2x3emp}.
\end{proof}
\begin{claim}\label{good}
$G$ contains a good triple.
\end{claim}
\begin{proof}
Since $\delta(G)\geq k+3\geq k+\frac{6}{2}$, there exists a path $P=y_1y_2y_3y_4\subseteq G[F]$ such that $\kappa(G-V(P))\geq k$ by Theorem \ref{tripath} $(1)$, and $|F|\geq k+6$ by Lemma \ref{Fk+m} and thus $F\setminus V(P)\neq \emptyset$. By Lemma \ref{subend}, we have that $H_1=G-V(P)$ has connectivity $k$ and there exists an end $F_1\subseteq F$ such that $N_G(F_1)\cap V(P)\neq \emptyset$. Since $G$ is triangle-free and $P$ is a path, $\delta(H_1)\geq \delta(G)-2\geq k+1$.

For every $z\in F_1$, we assert that $y_2,y_3\notin N_G(z)$ and $z$ has at most one neighbor in $\{y_1,y_4\}$. Since $\delta(H_1)\geq k+1$, by Corollary \ref{trim=2} $(1)$, there exists $z'\in N_{H_1}(z)\cap F_1$ such that $H_1-\{z,z'\}$ is $k$-connected. Suppose $y_2\in N_G(z)$. Then the tree with center $y_2$ and three legs $y_1,y_3y_4,zz'$ is the desired tree, a contradiction. By symmetry of $y_2$ and $y_3$, we also have $y_3\notin N_G(z)$. Suppose $\{y_1,y_4\}\subseteq N_G(z)$. Then the tree with center $z$ and three legs $z',y_1y_2,y_3y_4$ is the desired tree, a contradiction. So every vertex of $F_1$ has at most one neighbor on $P$, and $|F_1|\geq \delta(H_1[F_1])+1\geq \delta(G)-k\geq3$.

Reversing $P$ if necessary, choose $a\in N_G(y_4)\cap F_1$. By Corollary \ref{trim=2} $(1)$, there exists $b\in N_{H_1}(a)\cap F_1$ such that $\kappa(H_1-\{a,b\})\geq k$. Moreover, since $F_1\setminus \{a,b\}\neq \emptyset$, Lemma \ref{subend} implies that $H_2=H_1-\{a,b\}$ has connectivity $k$ and there exists an end $F_2\subseteq F_1$ of $H_2$ such that $N_G(F_2)\cap \{a,b\}\neq \emptyset$. As $G$ is triangle-free, every vertex of $F_2$ has at most one neighbor in $\{a,b\}$. So $|F_2|\geq \delta(H_2[F_2])+1\geq \delta(G)-2-k+1\geq 2$. Thus $(y_1y_2,y_3y_4ab,F_2)$ is a triple.

It follows from Claim \ref{x2x3emp} that $a\notin N_G(F_2)$. Thus, $b\in N_G(F_2)$. Since $y_3\notin N_G(F_1)$ and $F_2\subseteq F_1$, $y_3\notin N_G(F_2)$. Hence, $(y_1y_2,y_3y_4ab,F_2)$ is a good triple.
\end{proof}

Choose a good triple $(ab,cdef,F^*)$ with $|F^*|$ as small as possible. Reversing $cdef$ if necessary, assume $f\in N_G(F^*)$ and $c\notin N_G(F^*)$.
By Claim \ref{x2x3emp}, $N_G(F^*)\cap \{d,e\}=\emptyset$.

Let $H=G-\{a,b,c,d,e,f\}$ and let $Q=G-\{a,b,e,f\}$. Since $G$ is triangle-free and $ab$, $cdef$ are both paths, every vertex of $G$ has at most three neighbors in $\{a,b,c,d,e,f\}$ and thus each of $c$ and $d$ has at least $\delta(G)-3\geq k$ neighbors in $H$. Recall that $N_G(F^*)\cap \{c,d\}=\emptyset$. By Lemma \ref{lem:add-vertex-outside-end}, $\kappa(Q)=k$ and $F^*$ remains an end of $Q$. Since $ab,ef$ are both edges, $\delta(Q)\geq \delta(G)-2\geq k+1$.

We assert that every $z\in F^*$ has at most one neighbor in $\{a,b,f\}$. Suppose, to the contrary, that some $z\in F^*$ has at least two neighbors in $\{a,b,f\}$.  Since $ab\in E(G)$ and $G$ is triangle-free, $z$ cannot be adjacent to both $a$ and $b$. By symmetry, we may assume that $za,zf\in E(G)$. Since $\delta(Q)\geq k+1$, by Corollary \ref{trim=2} $(1)$, there exists $z'\in N_Q(z)\cap F^*$ such that $G-\{a,b,e,f,z,z'\}=Q-\{z,z'\}$ is $k$-connected. Then the tree with center $z$ and three legs $z',ab,ef$ is the desired tree, a contradiction. Consequently, every vertex of $F^*$ has at most one neighbor in $\{a,b,e,f\}$ and $|F^*|\geq \delta(G)-1-k+1\geq 3$.

Choose $g\in N_G(f)\cap F^*$. Corollary \ref{trim=2} $(1)$ implies that there exists $h\in N_Q(g)\cap F^*$ such that $O=Q-\{g,h\}$ is $k$-connected. As $F^*\setminus \{g,h\}\neq \emptyset$, by Lemma \ref{subend}, $\kappa(O)=k$ and there exists an end $F'\subseteq F^*$ of $O$ such that $N_G(F')\cap \{g,h\}\neq \emptyset$.

It follows from $F'\subseteq F^*$ that every vertex of $F'$ has at most one neighbor in $\{a,b,e,f\}$. Moreover, since $G$ is triangle-free and $gh$ is a path, every vertex of $F'$ has at most one neighbor in $\{g,h\}$. Thus, $|F'|\geq \delta(O[F'])+1\geq \delta(G)-2-k+1\geq 2$. Hence, $(ab,efgh,F')$ is a triple. By Claim \ref{x2x3emp}, $g\notin N_G(F')$ and thus $h\in N_G(F')$. We also have $e\notin N_G(F')$ since $F'\subseteq F^*$. Then $|N_G(F')\cap \{e,h\}|=1$. Hence, $(ab,efgh,F')$ is a good triple with $|F'|<|F^*|$, a contradiction.
\end{proof}
Let $P_s^+(i)$ be a non-path tree with center $c$ and three legs $L_j=v_j^1\cdots v_j^{l_j}$ for $j\in[3]$, where $l_1\geq l_2\geq l_3=1$ and $l_1\geq 3$. Write $v_j^0=c$. Then the following proposition holds.
\begin{proposition}\label{s}
Let $k$ be a positive integer, let $G$ be a triangle-free graph with $\kappa(G)=k$ and $\delta(G)\geq k+\max(P_s^+(i))$ and let $F$ be an arbitrary end of $G$. If $G[F]$ contains a subgraph $T\cong P_s^+(i)-\{v_1^{l_1},v_1^{l_1-1}\}$ such that $\kappa(G-V(T))\geq k$, then $G[F]$ contains a subgraph $H\cong P_s^+(i)$ such that $\kappa(G-V(H))\geq k$.
\end{proposition}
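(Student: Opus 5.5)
We extend the given $T$ by two vertices at the end of leg $L_1$, in the same way the proof of Theorem~\ref{tripath} extends a path. Set $G_1=G-V(T)$. Note that $T\cong P_{s-2}^+(i')$ is a tree whose leg $L_1$ has been shortened by two. Removing two consecutive vertices of a leg lowers each colour class by exactly one, so $\max(T)=\max(P_s^+(i))-1$. By Lemma~\ref{vT^*}, every vertex of $G$ has at most $\max(T)$ neighbours in $T$, hence
\[
\delta(G_1)\ge k+1 .
\]
We also need $v_1^{l_1-2}$ to have neighbours outside $T$. This vertex has the $T$-neighbour $v_1^{l_1-3}$ (which is $c$ if $l_1=3$), and by triangle-freeness it is nonadjacent to every other $T$-neighbour of that vertex. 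A direct count, or Lemma~\ref{vT^*} applied to the appropriate smaller tree, gives $|N_G(v_1^{l_1-2})\cap V(G_1)|\ge \delta(G)-\max(T)\ge k+1$.

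Since $F$ is an end and $\emptyset\neq V(T)\subsetneq F$ (we have $|F|\ge k+s$ by Lemma~\ref{Fk+m}), Lemma~\ref{subend} gives $\kappa(G_1)=k$. The natural next move would be to take an end $F_1\subseteq F$ of $G_1$ meeting $N_G(V(T))$. However, we specifically need $F_1$ to meet $N_G(v_1^{l_1-2})$, and this is where the real work lies.

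The plan is as follows. First I will show that there is an end $F_1\subseteq F$ of $G_1$ with $N_G(v_1^{l_1-2})\cap F_1\neq\emptyset$. Then I pick $x\in N_G(v_1^{l_1-2})\cap F_1$. Because $\delta(G_1)\ge k+1$, Corollary~\ref{trim=2}~(1) applied to $G_1$ gives $y\in N_G(x)\cap F_1$ with $\kappa(G_1-\{x,y\})\ge k$. Setting $v_1^{l_1-1}=x$ and $v_1^{l_1}=y$ produces $H\cong P_s^+(i)$ in $G[F]$ with $\kappa(G-V(H))\ge k$, which is the statement.

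The main obstacle is the first step, since the ends of $G_1$ inside $F$ might only see other vertices of $T$. I would argue by contradiction, mimicking Proposition~\ref{P5+3}. Suppose every end $F_1\subseteq F$ of $G_1$ has $N_G(F_1)\cap V(T)$ avoiding $v_1^{l_1-2}$. Take such an end $F_1$ and a vertex $z\in V(T)$ adjacent to it. I would then re-root: using Corollary~\ref{trim=2} inside $F_1$, find an edge $ab$ hanging off $z$, and re-form a tree isomorphic to $T$ (or to $P_s^+(i)$ directly) using $ab$ and a different set of leg vertices. The point is that triangle-freeness limits the adjacency patterns, as in Claims~\ref{uvdegree} and~\ref{x2x3emp}.

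To make this terminate, I would choose the counterexample configuration with $|F_1|$ minimal, analogous to the good-triple minimality argument. This lets me derive a contradiction by producing a smaller end that is adjacent to the current leg tip. If this becomes unwieldy, a fallback is to choose $T$ among all valid copies so that the end of $G-V(T)$ is minimal. Then, if no end sees $v_1^{l_1-2}$, swapping the leg tip with a vertex of the adjacent end gives a copy with a strictly smaller end, which is again a contradiction. Here Lemma~\ref{com} (with $\delta\ge k+\max$) certifies that the connectivity of the modified copy is still at least $k$.
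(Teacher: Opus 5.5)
Your closing step is fine. Once some end $F_1\subseteq F$ of $G_1$ meets $N_G(v_1^{l_1-2})$, the bound $\delta(G_1)\ge k+1$ from Lemma~\ref{vT^*} together with Corollary~\ref{trim=2}(1) gives $x,y$ and hence $H$. But that first step is the entire content of the proposition, and you do not prove it. Worse, for the given $T$ it need not hold: nothing prevents every neighbour of $v_1^{l_1-2}$ outside $T$ from lying in $S$, or in parts of $F\setminus V(T)$ outside every end of $G_1$, while the ends of $G_1$ inside $F$ are seen only by $c$, by $v_3^1$, or by vertices of $L_2$. So the tree has to be rebuilt, and ``re-root'', ``swap the leg tip'' and ``minimise the end'' do not say how.

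A one-vertex swap of the tip only makes sense when the end is seen by $v_1^{l_1-3}$. In general you must do what the paper does:
\begin{itemize}
\item Take the largest $h$ such that $v_1^h$ (with $v_1^0=c$) sees the end.
\item Delete $v_1^{h+1},\dots,v_1^{l_1-2}$. Lemma~\ref{lem:add-vertex-outside-end} keeps the end, because these vertices have no neighbours in it and at least $k+1$ neighbours in $G_1$.
\item Regrow a leg of order $l_1-h$ from $v_1^h$ into the end. This needs the end-rooted form of Theorem~\ref{tripath} under the bound $\delta\ge k+\lfloor (l_1-h)/2\rfloor$ supplied by Lemma~\ref{vT^*}, plus a parity correction via Lemma~\ref{single} when $l_1-h$ is odd. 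A single edge from Corollary~\ref{trim=2} is not enough.
\end{itemize}
If only $v_3^1$ sees the end, you must exchange the roles of $L_3$ and $L_1$ and grow a leg of order $l_1-1$ from $v_3^1$.

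The case your sketch does not cover at all is when only vertices of $L_2$ see the end; none of the moves you describe handle it. The paper needs a separate extremal argument here.
\begin{itemize}
\item Form a path of order $l_2+2$ starting at the last $L_2$-vertex that sees the end and running through $c$ into $L_1$. If it is too short, extend it into the end with Lemma~\ref{single}. Arrange that exactly one endvertex sees the end.
\item Among all such pairs (path $x_1\cdots x_{l_2+2}$ with $\kappa(G-\{x_1,\dots,x_{l_2+2}\})=k$, end $F_P$), choose one with $|F_P|$ minimal.
\item Then $x_2$ cannot see $F_P$. Otherwise the path is a copy of $P_s^+(i)-V(L_1)$ centred at $x_2$, and $L_1$ can be regrown as above.
\item Hence sliding to $x_2\cdots x_{l_2+2}y$ with $y\in N_G(x_{l_2+2})\cap F_P$ produces a strictly smaller end, a contradiction.
\end{itemize}

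Finally, you cannot invoke Lemma~\ref{com} to certify the connectivity of modified copies. It is stated only for paths, and its hypothesis $\kappa(G\langle S\rangle-(\bar F\cup V(P)))\ge k$ is precisely the kind of statement you would still have to establish.
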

\begin{proof}
Suppose, to the contrary, that $G$ is a triangle-free graph with $\kappa(G)=k$ and $\delta(G)\geq k+\max(P_s^+(i))$, $G$ contains an end $F^\star$ such that $G[F^\star]$ contains a subgraph $T\cong P_s^+(i)-\{v_1^{l_1},v_1^{l_1-1}\}$ with $\kappa(G-V(T))\geq k$ and $G[F^\star]$ contains no subgraph $H\cong P_s^+(i)$ with $\kappa(G-V(H))\geq k$.

Let $\phi$ be an isomorphism from $P_s^+(i)-\{v_1^{l_1},v_1^{l_1-1}\}$ to $T$. Write $\phi(c)=c'$ and $\phi(v_j^r)=u_j^r$ for every vertex $v_j^r\in V(P_s^+(i)-\{v_1^{l_1-1},v_1^{l_1}\})$. Write $L_1'=u_1^1\cdots u_1^{l_1-2}$, $L_2'=u_2^1\cdots u_2^{l_2}$, and $L_3'=u_3^1$. Also, write $u_1^0=c'$.

\begin{claim}\label{degreek+1}
Let $T^\star$ be a tree in $G$ isomorphic to $P_s^+(i)-V(L_1[v_1^j,v_1^{l_1}])$, where $j\in[l_1-1]$. Then $|N_G(u)\cap V(G-V(T^\star))|\ge k+\left \lfloor \frac{l_1-j+1}{2} \right \rfloor$ for every vertex $u\in V(G)$.
\end{claim}
\begin{proof}
Choose an arbitrary vertex $u\in V(G)$. By Lemma \ref{vT^*}, $|N_G(u)\cap V(T^\star)|\leq \max(T^\star)$. Therefore,
\begin{align*}
|N_G(u)\cap V(G-V(T^\star))|&\geq \delta(G)-\max(T^\star)\geq k+\max(P_s^+(i))-\max(T^\star)\\
&=k+(\left \lceil \frac{l_1}{2}\right \rceil+\left \lceil \frac{l_2}{2}\right \rceil+1)-(\left \lceil \frac{j-1}{2}  \right \rceil+\left \lceil \frac{l_2}{2}\right \rceil+1)\\
&\geq k+\left \lfloor \frac{l_1-j+1}{2} \right \rfloor.
\end{align*}
\end{proof}
\begin{claim}\label{degreel_1}
For any $j\in [l_1-1]$, if $G[F^\star]$ contains a subgraph $T^\star\cong P_s^+(i)-V(L_1[v_1^j, v_1^{l_1}])$ with an isomorphism $\sigma: P_s^+(i)-V(L_1[v_1^j,v_1^{l_1}])\to T^\star$ such that $\kappa(G-V(T^\star))=k$ and $N_G(\sigma(v_1^{j-1}))\cap V(F_j)\neq\emptyset$, where $F_j\subseteq F^\star$ is an end of $G-V(T^\star)$, then $G[F^\star]$ contains a subgraph $H \cong P_s^+(i)$ such that $\kappa(G-V(H))\ge k$.
\end{claim}
\begin{proof}
Choose $w\in N_G(\sigma(v_1^{j-1}))\cap F_j$. By Claim \ref{degreek+1}, $\delta(G-V(T^\star))\geq k+\left \lfloor \frac{l_1-j+1}{2} \right \rfloor$. If $l_1-j+1$ is even, Theorem \ref{tripath} implies that $G[F_j]$ contains a path $P_w$ of order $l_1-j+1$ starting from $w$ such that $\kappa(G-(V(T^\star)\cup V(P_w)))\geq k$. Joining $P_w$ to $T^\star$ by the edge $\sigma(v_1^{j-1})w$ gives $H$.

If $l_1-j+1$ is odd, we have $l_1-j+1\geq 3$ since $j\leq l_1-1$. Thus, $\delta(G-V(T^\star))\geq k+\left \lfloor \frac{l_1-j+1}{2} \right \rfloor\geq k+1$. By Lemma \ref{single}, $\kappa(G-(V(T^\star)\cup \{w\}))=k$ and $G-(V(T^\star)\cup \{w\})$ contains an end $F'\subseteq F_j$ such that $N_G(w)\cap V(F')\neq \emptyset$. Let $T'$ be obtained from $T^\star$ by adding $w$ and the edge $\sigma(v_1^{j-1})w$. Then $T'\subseteq G[F^\star]$ is isomorphic to $P_s^+(i)-V(L_1[v_1^{j+1},v_1^{l_1}])$ with $v_1^j$ corresponding to $w$. Applying Claim \ref{degreek+1} to $T'$ with the index $j+1$, we obtain $\delta(G-V(T'))\ge k+\left\lfloor\frac{l_1-j}{2}\right\rfloor$. Since $l_1-j$ is an even integer at least $2$, the preceding even case applies.
\end{proof}
Since $\delta(G)\geq k+\max(P_s^+(i))\geq k+\lceil\frac{s}{2}\rceil$, by Lemma \ref{Fk+m}, $|F^\star|\geq k+s$. Let $G_1=G-V(T)$. Since $F^\star\setminus V(T)\neq \emptyset$ and $\kappa(G_1)\geq k$, by Lemma \ref{subend}, $\kappa(G_1)=k$ and there exists an end $F\subseteq F^\star$ of $G_1$ such that $N_G(T)\cap F\neq\emptyset$.
\begin{claim}\label{Xneighbor}
For every $X\subseteq V(T)$ with $N_G(X)\cap F=\emptyset$, the graph $G[V(G_1)\cup X]$ has connectivity $k$ and still has $F$ as an end.
\end{claim}
\begin{proof}
It follows from Claim \ref{degreek+1} with $j=l_1-1$ that every vertex of $T$ has at least $k+1$ neighbors in $G_1$. Consequently, for every $X\subseteq V(T)$ with $N_G(X)\cap F=\emptyset$, Lemma \ref{lem:add-vertex-outside-end} implies that $G[V(G_1)\cup X]$ has connectivity $k$ and still has $F$ as an end.
\end{proof}

We distinguish the following three cases according to the adjacency between $T$ and $F$:

{\bf Case A.} $N_G(c'L_1')\cap F\neq \emptyset$.

Let $h$ be the largest index such that $N_G(u_1^h)\cap F\neq\emptyset$, where $0\leq h\leq l_1-2$. Let $X=\{u_1^r:h+1\leq r\leq l_1-2\}$ and $T'=T-X$. By the choice of $h$, $N_G(X)\cap F=\emptyset$. Thus $\kappa(G-V(T'))=k$ and $F$ remains an end of $G-V(T')$ by Claim \ref{Xneighbor}. Applying Claim \ref{degreel_1} with $j=h+1$ gives a contradiction.

{\bf Case B.} $N_G(c'L_1')\cap F=\emptyset$ and $N_G(L_3')\cap F\neq \emptyset$.

Let $T'=T-\{u_1^r:2\leq r\leq l_1-2\}$. Then $\kappa(G-V(T'))=k$ and $F$ remains an end of $G-V(T')$ by Claim \ref{Xneighbor}. After interchanging the roles of $u_1^1$ and $u_3^1$ in $T'$, we obtain a contradiction from Claim \ref{degreel_1} with $j=2$.

{\bf Case C.} $N_G(c'L_1'L_3')\cap F=\emptyset$ and $N_G(V(L_2'))\cap F\neq \emptyset$.

Let $h$ be the largest index such that $N_G(u_2^h)\cap F\neq\emptyset$. Let $Q$ be the subpath of $L_2'[u_2^h,u_2^1]c'L_1'$ of order $\min\{h+l_1-1,l_2+2\}$ starting at $u_2^h$. Since $h\leq l_2$ and $l_1\geq3$, the other endvertex of $Q$ lies in $L_1'$. By the choice of $h$, exactly one endvertex of $Q$ has a neighbor in $F$ and $N_G(T-V(Q))\cap F=\emptyset$. By Claim \ref{Xneighbor}, $\kappa(G-V(Q))=k$ and $F$ remains an end of $G-V(Q)$.

If $|Q|<l_2+2$, we extend $Q$ as follows. Since $G$ is triangle-free, for every path $R$ of order at most $l_2+2$, $\delta(G-V(R))
\geq (k+\left \lceil \frac{l_1}{2} \right \rceil +\left \lceil \frac{l_2}{2} \right \rceil +1)-\left \lceil \frac{l_2+2}{2} \right \rceil\geq k+1$.
Let $a$ be the endvertex of $Q$ having a neighbor $x$ in $F$. By Lemma \ref{single}, $\kappa(G-(V(Q)\cup\{x\}))=k$ and $G-(V(Q)\cup\{x\})$ contains an end
$F'\subseteq F$ such that $N_G(x)\cap V(F')\neq\emptyset$. Extending $Q$ by the edge $ax$, we obtain a longer path with exactly one endvertex having a neighbor in $F'$. Replacing $Q$ by this longer path and $F$ by $F'$, we can repeat this step until the path has order $l_2+2$.

We may choose a path $P=x_1\cdots x_{l_2+2}\subseteq G[F^\star]$ and an end $F_P\subseteq F^\star$ of $G-V(P)$  with $|F_P|$ as small as possible such that $\kappa(G-V(P))=k$ and exactly one endvertex of $P$ has a neighbor in $F_P$. Without loss of generality, assume that $N_G(x_1)\cap V(F_P)=\emptyset$ and $N_G(x_{l_2+2})\cap V(F_P)\neq\emptyset$.

We assert that $N_G(x_2)\cap F_P=\emptyset$. Otherwise, view $P$ as a copy of $P_s^+(i)-V(L_1)$ with $c$ corresponding to $x_2$. Applying Claim \ref{degreel_1} with $j=1$ then gives a contradiction.

Choose $y\in N_G(x_{l_2+2})\cap F_P$. Since $\delta(G-V(P))\geq k+\left \lceil \frac{l_1}{2} \right \rceil\geq k+1$, by Lemma \ref{single}, $\kappa(G-(V(P)\cup \{y\}))=k$ and $G-(V(P)\cup \{y\})$ contains an end $F'\subseteq F_P$ such that $N_G(y)\cap V(F')\neq\emptyset$.
Let $P'=x_2\cdots x_{l_2+2}y$.  The vertex $x_1$ has no neighbor in $F'$ and has at least $\delta(G)-\left \lceil \frac{l_2+2}{2} \right \rceil-1\geq k+\left \lceil \frac{l_1}{2} \right \rceil-1\geq k$ neighbors in $G-(V(P)\cup \{y\})$. By Lemma \ref{lem:add-vertex-outside-end}, adjoining $x_1$ implies $\kappa(G-V(P'))=k$ and $F'$ is still an end of $G-V(P')$. Since $N_G(x_2)\cap V(F_P)=\emptyset$, exactly one endvertex of $P'$ has a neighbor in $F'$. Together with $P'\subseteq F^\star$ and $F'\subseteq F^\star$, this contradicts the choice of $P,F_P$ as $|F'|<|F_P|$.

This completes the proof of Proposition \ref{s}.
\end{proof}
\begin{proof}[Proof of Theorem \ref{triangle}]
We prove the assertion for all positive integers $k$ by induction on the order of $P_m^+(i)$. When $P_m^+(i)$ is a path, the assertion holds by Theorem \ref{tripath}.

Proposition \ref{P5+3} settles the case $P_6^+(3)$, and also resolves the two cases $P_4^+(2)$ and $P_5^+(2)$. To see this, the minimum degree conditions are the same since $\max(P_6^+(3))=\max(P_4^+(2))=\max(P_5^+(2))=3$. Choose a subgraph $R\cong P_6^+(3)$ such that $\kappa(G-V(R))\geq k$. When $\kappa(G)=k$, choose $R$ inside the prescribed end. Choose a subgraph $T\subseteq R$ isomorphic to $P_4^+(2)$ or $P_5^+(2)$. By Lemma \ref{vT^*}, every vertex of $R\setminus V(T)$ has at least $\delta(G)-3\geq k$ neighbors in $G-V(R)$. Adding these vertices back implies $\kappa(G-V(T))\geq k$.

All remaining non-path cases have legs $L_j=v_j^1\cdots v_j^{l_j}$ for $j\in[3]$ with $l_1\geq l_2\geq l_3=1$ and $l_1\geq 3$. Note that $\max(P_m^+(i)-\{v_1^{l_1},v_1^{l_1-1}\})=\max(P_m^+(i))-1$.

If $\kappa(G)=k$, since $\delta(G)>k+\max(P_m^+(i)-\{v_1^{l_1},v_1^{l_1-1}\})$, we apply the induction hypothesis together with Proposition \ref{s} to obtain the desired conclusion.

If $\kappa(G)\geq k+1$, since $\delta(G)\geq (k+1)+\max(P_m^+(i)-\{v_1^{l_1},v_1^{l_1-1}\})$, the induction hypothesis implies that $G$ contains a subgraph $H\cong P_m^+(i)-\{v_1^{l_1},v_1^{l_1-1}\}$ with an isomorphism $\ell:P_m^+(i)-\{v_1^{l_1},v_1^{l_1-1}\}\to H$ such that $\kappa(G-V(H))\geq k+1$. By Lemma \ref{vT^*}, each vertex of $G$ has at most $\max(P_m^+(i)-\{v_1^{l_1},v_1^{l_1-1}\})=\max(P_m^+(i))-1$ neighbors in $H$. Thus, $\ell(v_1^{l_1-2})$ has at least $k+\max(P_m^+(i))-(\max(P_m^+(i))-1)=k+1$ neighbors in $G-V(H)$ and $\delta(G-V(H))\geq k+1$. Choose $b\in N_G(\ell(v_1^{l_1-2}))\cap V(G-V(H))$. By Lemma \ref{m=2} $(2)$, there exists $b'\in N_{G-V(H)}(b)$ such that $\kappa(G-(V(H)\cup \{b,b'\}))\geq k$. Joining $bb'$ to $H$ by the edge $\ell(v_1^{l_1-2})b$ gives the desired subgraph isomorphic to $P_m^+(i)$.
\end{proof}

\end{spacing}


\begin{thebibliography}{99}


\bibitem{Bondy} J.A. Bondy and U.S.R. Murty, Graph Theory, Springer, New York, 2007.

\bibitem{Chartrand} G. Chartrand, A. Kaugars, and D.R. Lick, Critically $n$-connected graphs, Proc. Am. Math. Soc. 32 (1972) 63-68.

\bibitem{CFPR} H. Chu, S. Fujita, B. Park and H. Ryu, Connectivity keeping trees in triangle-free graphs, arXiv:2511.06622 (2025).

\bibitem{Diwan} A.A. Diwan and N.P. Tholiya, Non-separating trees in connected graphs, Discrete Math. 309 (2009) 5235-5237.

\bibitem{Ftriangle} S. Fujita, Connectivity keeping paths containing prescribed vertices in highly connected triangle-free graphs, J. Combin. Theory Ser. B 174 (2025) 190-206.

\bibitem{Fujita}  S. Fujita and K. Kawarabayashi, Connectivity keeping edges in graphs with large minimum degree, J. Combin. Theory Ser. B 98 (2008) 805-811.

\bibitem{Hasunuma2}  T. Hasunuma and K. Ono, Connectivity keeping trees in $2$-connected graphs, J. Graph Theory 94 (2020) 20-29.

\bibitem{HongLiu} Y. Hong and Q. Liu, Mader's conjecture for graphs with small connectivity, J. Graph Theory 101 (3) (2022) 379-388.

\bibitem{HongLLY} Y. Hong, Q. Liu, C. Lu, and Q. Ye, Connectivity keeping caterpillars and spiders in $2$-connected graphs, Discrete Math. 344 (2021), 112236.

\bibitem{HongWuLiu} Y. Hong, Y. Wu, and Q. Liu, Redundant Trees in Bipartite Graphs, Mathematics 13 (6) (2025) 1005.

\bibitem{Ji} M. Ji, Connectivity keeping paths for $k$-connected bipartite graphs, Discrete Mathematics, Algorithms and Applications 17 (1) (2025) 2430003.
    
\bibitem{Ji2} M. Ji, Non-path results on the connectivity keeping problem, Discussiones Math. Graph Theory, 45(4) (2025) 1163-1184.

\bibitem{LYH} Q. Liu, K. Ying, and Y. Hong, Highly connected triples and Mader's conjecture, J. Graph Theory 107 (2024) 478-484.

\bibitem{Lu}  C. Lu and P. Zhang, Connectivity keeping trees in $2$-connected graphs, Discrete Math. 343 (2020), no. 2, 1-4.

\bibitem{Luo} L. Luo, Y. Tian, and L. Wu, Connectivity keeping paths in $k$-connected bipartite graphs, Discrete Math. 345 (2022), no. 4, 112788.

\bibitem{Mader1} W. Mader, Connectivity keeping paths in $k$-connected graphs, J. Graph Theory 65 (2010) 61-69.

\bibitem{Mader2} W. Mader, Connectivity keeping trees in $k$-connected graphs, J. Graph Theory 69 (2012) 324-329.

\bibitem{Tian3} Y. Tian, J. Meng, A survey on the vertex-(edge-)$k$-maximal graphs and the $k$-vertex-(edge-)connected graphs with redundant subgraphs, Discrete Appl. Math. 378 (2026), 125-135.

\bibitem{Tian1} Y. Tian, J. Meng, H. Lai, and L. Xu, Connectivity keeping stars or double-stars in $2$-connected graphs, Discrete Math. 341 (4) (2018) 1120-1124.

\bibitem{Tian2} Y. Tian, H. Lai, L. Xu, and J. Meng, Nonseparating trees in $2$-connected graphs and oriented trees in strongly connected digraphs, Discrete Math. 342 (2) (2019) 344-351.

\bibitem{YangTian} Q. Yang, Y. Tian, Proof of a conjecture on connectivity keeping odd paths in k-connected bipartite
graphs, Discrete Math. 348 (2025) 114476.
\end{thebibliography}
\end{document}